\documentclass[11pt,reqno]{amsart}

\usepackage[utf8]{inputenc}

\usepackage{amsmath,amssymb,amsthm}
\usepackage{mathrsfs}

\usepackage{indentfirst}
\usepackage{setspace}
\usepackage{enumerate}

\usepackage{graphicx}
\usepackage{tikz}
\usepackage{float}

\usepackage{xcolor}
\usepackage{colortbl}
\usepackage{multirow}
\usepackage{booktabs}

\usepackage[
    colorlinks=true,
    linkcolor=purple,
    citecolor=blue,
    urlcolor=magenta
]{hyperref}

\definecolor{sectionlink}{RGB}{0,100,200}

\newcommand{\leftmoon}{\ensuremath{(\!)}}  

\newtheorem{theorem}{Theorem}[section]
\newtheorem{lemma}{Lemma}[section]

\numberwithin{equation}{section}

\begin{document}

\title[Sharp Bounds for Higher-Order Schippers Functionals]{Sharp Bounds for Higher-Order Schippers Functionals Associated with Lune and Bean Domains}

\author[F. Parveen and P. Das]{Firdoshi Parveen$^{1}$ and Pradip Das$^{*,2}$}

\address{$^{1}$Department of Mathematics and Statistics, Aliah University, Kolkata 700160, West Bengal, India}
\address{$^{2}$Department of Mathematics, Raiganj University, Raiganj 733134, West Bengal, India}

\email{frd.par@gmail.com}
\email{pradipsmath@gmail.com}

\renewcommand{\thefootnote}{}
\footnote{2020 \emph{Mathematics Subject Classification}. Primary 30C45; Secondary 30C50, 30C80.}
\footnote{\emph{Key words and phrases}. Univalent functions, Schippers functionals, higher-order Schwarzian derivatives, differential subordination, Grunsky coefficients, bean-shaped domain, lune-shaped domain.}
\footnote{*\emph{Corresponding Author}: Pradip Das.}

\renewcommand{\thefootnote}{\arabic{footnote}}
\setcounter{footnote}{0}

\begin{abstract}
We obtain sharp bounds for the third- and fourth-order Schippers functionals, $|\sigma_3(f)(0)|$ and $|\sigma_4(f)(0)|$, for subclasses of univalent functions associated with non-classical geometric domains. In particular, we investigate the lune-starlike class $\mathcal{S}_{\leftmoon}^*$ and the lune-convex class $\mathcal{C}_{\leftmoon}$ determined by the subordination
\[
\frac{zf'(z)}{f(z)} \prec z+\sqrt{1+z^2},
\qquad
1+\frac{zf''(z)}{f'(z)} \prec z+\sqrt{1+z^2},
\]
respectively, together with the bean-domain class $\mathcal{BT}_{\mathfrak{B}}$ associated with
\[
\mathfrak{B}(z)=\sqrt{1+\tanh z}.
\]
Using Carath\'eodory coefficient parametrizations and extremal optimization techniques, we derive exact estimates for the higher-order Schwarzian derivatives at the origin and identify the corresponding extremal functions. In addition, geometric descriptions of the associated extremal image domains are provided to illustrate the sharpness phenomena. The obtained results further yield sharp bounds for the initial Grunsky coefficients $g_{1,1}$ and $g_{1,2}$. These findings provide a precise description of higher-order Schwarzian structures for univalent functions related to lune- and bean-shaped domains.
\end{abstract}

\maketitle

\section{Introduction}

The Schwarzian derivative and its higher-order generalizations occupy a central position in geometric function theory, conformal geometry, and the theory of Teichm\"uller spaces. For a locally univalent function $f$, the classical Schwarzian derivative $S_f$ is defined as
\[
S_f = \left(\frac{f''}{f'}\right)' - \frac{1}{2}\left(\frac{f''}{f'}\right)^2.
\]
A fundamental characteristic of this operator is its invariance under M\"obius transformations; specifically, $S_{L\circ f} = S_f$ holds for any linear fractional transformation $L(z) = \frac{az+b}{cz+d}$, where $a, b, c,$ and $d$ are complex constants satisfying $ad - bc \neq 0$. Notably, the condition $S_f = 0$ characterizes the set of M\"obius transformations. Beyond these algebraic properties, the Schwarzian derivative provides potent criteria for univalence. A landmark result by Nehari \cite{Nehari} established that for $f \in \mathcal{S}$,
\[
\|S_f\| = \sup_{z \in \mathbb{D}} |S_f(z)|(1 - |z|^2)^2 \le 6.
\]
Sharp bounds of this nature have been rigorously investigated for diverse subclasses of univalent functions (see \cite{KanasSugawa, Schippers}).

Motivated by this geometric utility, several higher-order generalizations have been proposed (see \cite{Harmelin, HuSrivastavaZhang, Tamanoi}). In particular, Schippers \cite{Schippers} introduced the higher-order Schwarzian derivatives $\sigma_n(f)$, defined recursively for $n \ge 3$ by
\[
\sigma_{n+1}(f) = \sigma_n'(f) - (n-1)\sigma_n(f)\frac{f''}{f'},
\]
where $\sigma_3(f) = S_f$. These operators preserve essential invariance properties and serve as refined instruments for probing coefficient structures. Furthermore, they are intimately connected to Grunsky coefficients, which provide necessary and sufficient conditions for univalence and help elucidate the coefficient body of the class $\mathcal{S}$.

Let $\mathcal{H}$ denote the class of analytic functions in the unit disk $\mathbb{D} = \{z \in \mathbb{C} : |z| < 1\}$, equipped with the topology of uniform convergence on compact subsets. We define the normalized class $\mathcal{A}$ as
\[
\mathcal{A} = \{f \in \mathcal{H} : f(0) = 0, \, f'(0) = 1\},
\]
and let $\mathcal{S} \subset \mathcal{A}$ be the subclass of univalent functions. Each $f \in \mathcal{S}$ admits a Taylor expansion of the form
\begin{equation}\label{eq1}
f(z) = z + \sum_{n=2}^{\infty} a_n z^n \quad \forall z \in \mathbb{D}.
\end{equation}

For a locally univalent function having expansion \eqref{eq1}, a direct computation yields
\[
\frac{f''(z)}{f'(z)} = 2a_2 + (6a_3 - 4a_2^2)z + (12a_4 - 18a_2 a_3 + 8a_2^3)z^2 + \cdots.
\]
From this, the initial values of the higher-order Schwarzian derivatives at the origin are determined to be
\begin{equation}\label{sg3}
\sigma_3(f)(0) = 6(a_3 - a_2^2),
\end{equation}
and
\begin{equation}\label{sg4}
\sigma_4(f)(0) = 24(a_4 - 3a_2 a_3 + 2a_2^3).
\end{equation}

Recall that a function $f \in \mathcal{A}$ is called starlike, ($\mathcal{S}^*$) (respectively convex ($\mathcal{C}$)) if
\[
\Re \left(\frac{z f'(z)}{f(z)}\right) > 0,\quad \Re\left(1 + \frac{z f''(z)}{f'(z)}\right) > 0.
\]

While Schippers \cite{Schippers} obtained sharp bounds for $\|\sigma_n(f)\|$ within the class $\mathcal{S}$, the quantities $|\sigma_n(f)(0)|$ for $n=3,4,5$ have been extensively studied across various subclasses. For instance, Dorff and Szynal \cite{DorffSzynal} derived bounds for the class of convex functions $\mathcal{C}$. Cho et al. \cite{ChoKumarRavichandran} later extended these results to subclasses defined by Janowski-type subordinations:
\[
\frac{z f'(z)}{f(z)} \prec \frac{1 + A z}{1 + B z}, \quad 1 + \frac{z f''(z)}{f'(z)} \prec \frac{1 + A z}{1 + B z}, \quad (-1 \le B < A \le 1).
\]
Further advancements include results for functions subordinate to $e^{e^z-1}$ \cite{KumarChoRavichandranSrivastava} and close-to-convex functions \cite{HuWangFan}. Recently, Hu, Srivastava, and Zhang \cite{HuSrivastavaZhang} explored hybrid subclasses, demonstrating refined bounds under combined geometric constraints. Interested readers may refer to the recent articles \cite{pradip1, pradip2} for further developments and related results.

A key mechanism in modern geometric function theory is differential subordination. Let $\Omega$ be the class of Schwarz functions $\omega$, such that $\omega(0)=0$ and $|\omega(z)|<1$. For analytic functions $f$ and $g$, we say $f$ is subordinate to $g$ ($f \prec g$) if $f(z)=g(\omega(z))$ for some $\omega \in \Omega$.

Recently, attention has shifted toward subclasses defined by specific image domains. Notable among these are the lune-type classes introduced by Raina and Sok\'o\l{} \cite{RainaSokol2015a}:
\[
\mathcal{S}_{\leftmoon}^* = \left\{ f \in \mathcal{S} : \frac{z f'(z)}{f(z)} \prec q(z) \right\}, \quad q(z) = z + \sqrt{1 + z^2},
\]
and the corresponding convex class $\mathcal{C}_{\leftmoon}$ defined by $1 + \frac{zf''(z)}{f'(z)} \prec q(z)$. These classes are associated with distinct geometric properties and have been studied regarding Hankel determinants and coefficient estimates \cite{RiazRaza2023}. Additionally, the bean-shaped domain class, introduced by Nandhini and Sruthakeerthi \cite{Nandhini}, is defined as:
\[
\mathcal{BT}_{\mathfrak{B}} = \left\{ f \in \mathcal{A} : f'(z) \prec \mathfrak{B}(z) = \sqrt{1 + \tanh z} \right\}.
\]

Despite significant progress in coefficient problems, sharp estimates for $|\sigma_3(f)(0)|$ and $|\sigma_4(f)(0)|$ remain largely unexplored for these non-classical geometric domains. The primary objective of this paper is to establish sharp bounds for these functionals within the classes $\mathcal{S}_{\leftmoon}^*$, $\mathcal{C}_{\leftmoon}$, and $\mathcal{BT}_{\mathfrak{B}}$. We organize the paper as follows. In Theorem~1, we obtain the best possible upper bounds for $|\sigma_3(f)(0)|$ and $|\sigma_4(f)(0)|$ whenever $f\in \mathcal{S}_{\leftmoon}^*$. The corresponding estimates for the subclasses $\mathcal{C}_{\leftmoon}$ and $\mathcal{BT}_{\mathfrak{B}}$ are established in Theorem~2 and Theorem~3 respectively. In the final section, namely Section~4, we discuss applications of the obtained results. More precisely, using these results, we determine estimates for some initial Grunsky coefficients whenever $f$ belongs to the above mentioned classes.

\section{Auxiliary lemmas}

Let $\mathcal{P}$ denote the class of analytic functions $p$ in $\mathbb{D}$ satisfying $p(0)=1$ and $\operatorname{Re} p(z)>0$. Each function $p \in \mathcal{P}$ admits the expansion
\begin{equation}\label{p1}
p(z) = 1 + \sum_{n=1}^{\infty} c_n z^n,\quad z\in \mathbb{D},
\end{equation}
with $|c_n| \le 2$ for all $n \ge 1$. The class $\mathcal{P}$ plays a crucial role in deriving the sharp coefficient bounds obtained throughout this work. We now recall some auxiliary lemmas associated with this class, which will be required in the proofs of our main results.

The following parametrization is due to Cho et al.~\cite{C12}.

\begin{lemma}\label{L1}
Let $p\in\mathcal P$ be given by \eqref{p1}. Then
\begin{align}
c_1&=2\tau_1, \label{c1}\\
c_2&=2\tau_1^2+2(1-|\tau_1|^2)\tau_2, \label{c2}\\
c_3&=2\tau_1^3+4(1-|\tau_1|^2)\tau_1\tau_2
-2(1-|\tau_1|^2)\overline{\tau_1}\tau_2^2
+2(1-\tau_1^2)(1-|\tau_2|^2)\tau_3, \label{c3}
\end{align}
for some $\tau_1,\tau_2,\tau_3\in\overline{\mathbb D}$. If $\tau_1\in\mathbb T:= \partial \mathbb{D}=\{z \in \mathbb{C} : |z| = 1 \}$, then the unique $p\in\mathcal P$ with $c_1$ given by \eqref{c1} is
\[
p(z)=\frac{1+\tau_1 z}{1-\tau_1 z}, \quad z\in \mathbb{D}.
\]
If $\tau_1\in\mathbb D$ and $\tau_2\in\mathbb T$, then the unique $p$ with $c_1,c_2$ as in \eqref{c1}-\eqref{c2} is
\[
p(z)=\frac{1+(\overline{\tau_1}\tau_2+\tau_1)z+\tau_2 z^2}
{1+(\overline{\tau_1}\tau_2-\tau_1)z-\tau_2 z^2}, \quad z\in \mathbb{D}.
\]
If $\tau_1,\tau_2\in\mathbb D$ and $\tau_3\in\mathbb T$, then the unique $p$ with $c_1,c_2,c_3$ as in \eqref{c1}-\eqref{c3} is
\[
p(z)=\frac{1+(\overline{\tau_2}\tau_3+\overline{\tau_1}\tau_2+\tau_1)z
+(\overline{\tau_1}\tau_3+\tau_1\overline{\tau_2}\tau_3+\tau_2)z^2
+\tau_3 z^3}
{1+(\overline{\tau_2}\tau_3+\overline{\tau_1}\tau_2-\tau_1)z
+(\overline{\tau_1}\tau_3-\tau_1\overline{\tau_2}\tau_3-\tau_2)z^2
-\tau_3 z^3} \quad z\in \mathbb{D}.
\]
\end{lemma}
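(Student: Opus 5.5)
We plan to derive the parametrization from the Schur algorithm applied to the Schwarz-type function associated with $p$. Given $p\in\mathcal P$ with expansion \eqref{p1}, set
\[
\omega(z)=\frac{p(z)-1}{p(z)+1},
\]
so that $\omega$ is analytic in $\mathbb D$, satisfies $\omega(0)=0$ and $|\omega|<1$, and hence $\omega\in\Omega$. We will write $\omega(z)=z\,\phi_0(z)$ with $\phi_0:\mathbb D\to\overline{\mathbb D}$ analytic (Schwarz lemma), and then run the Schur recursion
\[
\phi_{k+1}(z)=\frac{\phi_k(z)-\tau_{k+1}}{z\,(1-\overline{\tau_{k+1}}\phi_k(z))},\qquad \tau_{k+1}=\phi_k(0),
\]
for $k=0,1,2$. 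Each $\tau_{k+1}\in\overline{\mathbb D}$, and by Schwarz's lemma each $\phi_{k+1}$ is again a self-map of $\overline{\mathbb D}$ whenever $|\tau_{k+1}|<1$. If $|\tau_{k+1}|=1$, the maximum principle forces $\phi_k\equiv\tau_{k+1}$, and the recursion terminates. Inverting the recursion gives
\[
\phi_k=\frac{\tau_{k+1}+z\phi_{k+1}}{1+\overline{\tau_{k+1}}z\phi_{k+1}}.
\]
This expresses $\phi_0(z)=\tau_1+(1-|\tau_1|^2)\tau_2 z+\cdots$ in terms of $\tau_1,\tau_2,\tau_3$ up to order $z^2$.

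Next we will expand $p=(1+\omega)/(1-\omega)=1+2\omega+2\omega^2+2\omega^3+\cdots$ and compare coefficients. With $\omega=b_1z+b_2z^2+b_3z^3+\cdots$ this gives
\[
c_1=2b_1,\qquad c_2=2(b_2+b_1^2),\qquad c_3=2(b_3+2b_1b_2+b_1^3).
\]
Here $b_1=\tau_1$ and $b_2=(1-|\tau_1|^2)\tau_2$, and $b_3$ is obtained from the second step of the Schur recursion as
\[
b_3=(1-|\tau_1|^2)\bigl[(1-|\tau_2|^2)\tau_3-\overline{\tau_1}\tau_2^2\bigr].
\]
Substituting these yields \eqref{c1} and \eqref{c2} immediately. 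For \eqref{c3} it yields $2\tau_1^3+4(1-|\tau_1|^2)\tau_1\tau_2-2(1-|\tau_1|^2)\overline{\tau_1}\tau_2^2+2(1-|\tau_1|^2)(1-|\tau_2|^2)\tau_3$. So the factor in front of $\tau_3$ should read $1-|\tau_1|^2$, which is the form in Cho et al.; the $1-\tau_1^2$ in the statement I will treat as a typo.

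For the uniqueness assertions, we will use the termination of the Schur algorithm. If $\tau_1\in\mathbb T$, then $\phi_0\equiv\tau_1$, so $\omega=\tau_1 z$ and $p=(1+\tau_1z)/(1-\tau_1z)$. If $\tau_1\in\mathbb D$ and $\tau_2\in\mathbb T$, then $\phi_1\equiv\tau_2$, so $\phi_0=(\tau_1+\tau_2z)/(1+\overline{\tau_1}\tau_2 z)$, and $p=(1+z\phi_0)/(1-z\phi_0)$ simplifies to the stated quadratic rational function. The case $\tau_3\in\mathbb T$ is handled the same way, producing the cubic formula.

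Uniqueness follows because a Schur function whose first Schur parameters reach the unit circle is determined by those parameters. Conversely, the map $p\leftrightarrow\{\tau_k\}$ covers all admissible values, since any choice of $\tau_k\in\overline{\mathbb D}$ together with an arbitrary tail self-map reconstructs an element of $\mathcal P$.

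The main obstacle is the bookkeeping for $b_3$ and for the cubic rational function in the last case. We will do this carefully by composing the two Möbius steps symbolically and clearing denominators. All other steps are routine.
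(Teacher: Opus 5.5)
The paper gives no proof of Lemma~\ref{L1}; it quotes the lemma from Cho et al. Your Schur-algorithm argument is therefore a self-contained replacement for a citation, and it is correct.

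Writing $(p-1)/(p+1)=z\phi_0$ and iterating $\phi_k=(\tau_{k+1}+z\phi_{k+1})/(1+\overline{\tau_{k+1}}z\phi_{k+1})$ does give
$b_1=\tau_1$, $b_2=(1-|\tau_1|^2)\tau_2$ and $b_3=(1-|\tau_1|^2)\bigl[(1-|\tau_2|^2)\tau_3-\overline{\tau_1}\tau_2^2\bigr]$.
Setting $\phi_1\equiv\tau_2$, resp.\ $\phi_2\equiv\tau_3$, the quotient $(1+z\phi_0)/(1-z\phi_0)$ reproduces the displayed quadratic and cubic functions term by term.

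The citation buys brevity; your derivation buys verification, and it correctly exposes the misprint. The factor in front of $\tau_3$ must be $1-|\tau_1|^2$, which is also the form the paper itself uses in the proofs of Theorems~\ref{thm1} and~\ref{thm3}. With the printed $1-\tau_1^2$, the existence part would still hold, because $|1-\tau_1^2|\ge 1-|\tau_1|^2$ lets one rescale $\tau_3$ within $\overline{\mathbb D}$. The third uniqueness assertion, however, would fail for non-real $\tau_1$, since the displayed cubic $p$ has the $c_3$ with $1-|\tau_1|^2$. The slip is harmless in the paper only because $\tau_1$ is always normalized to $x\in[0,1]$.

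When you write it up, make two implicit steps explicit.
\begin{enumerate}[(a)]
\item If the recursion stops at $|\tau_1|=1$ or $|\tau_2|=1$, the remaining $\tau_k$ may be chosen arbitrarily in $\overline{\mathbb D}$, because they only appear multiplied by a vanishing factor.
\item The uniqueness hypotheses are stated in terms of $c_1,c_2,c_3$, not Schur parameters. So note that $b_1=c_1/2$, $b_2=c_2/2-b_1^2$ and $b_3=c_3/2-2b_1b_2-b_1^3$, together with $1-|\tau_1|^2\neq 0$ (and $1-|\tau_2|^2\neq 0$ in the cubic case), force the Schur parameters of any such $p$ to equal the given $\tau_k$. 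Only after that does your maximum-principle termination yield uniqueness.
\end{enumerate}
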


The following result is due to Choi et al.~\cite{CKS1}.

\begin{lemma}\label{L2}
Let $A,B,C\in\mathbb R$ and
\[
Y(A,B,C)=\max_{z\in\overline{\mathbb D}}
\{|A+Bz+Cz^2|+1-|z|^2\}.
\]
(i) If $AC\ge0$, then
\[
Y(A,B,C)=
\begin{cases}
|A|+|B|+|C|, & |B|\ge2(1-|C|),\\
1+|A|+\dfrac{B^2}{4(1-|C|)}, & |B|<2(1-|C|).
\end{cases}
\]
(ii) If $AC<0$, then
\[
Y(A,B,C)=
\begin{cases}
1-|A|+\dfrac{B^2}{4(1-|C|)}, & -4AC(C^{-2}-1)\le B^2,\ |B|<2(1-|C|),\\
1+|A|+\dfrac{B^2}{4(1+|C|)}, & B^2<\min\{4(1+|C|)^2,-4AC(C^{-2}-1)\},\\
R(A,B,C), & \text{otherwise},
\end{cases}
\]
where
\[
R(A,B,C)=
\begin{cases}
|A|+|B|-|C|, & |C|(|B|+4|A|)\le|AB|,\\
-|A|+|B|+|C|, & |AB|\le|C|(|B|-4|A|),\\
(|C|+|A|)\sqrt{1-\dfrac{B^2}{4AC}}, & \text{otherwise}.
\end{cases}
\]
\end{lemma}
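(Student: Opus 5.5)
The approach is to reduce the two-dimensional maximization over $\overline{\mathbb D}$ to two successive one-variable problems: first over the argument of $z$ with $|z|=r$ fixed, then over $r\in[0,1]$. Since $A,B,C$ are real, the expression is symmetric under $z\mapsto\overline z$. We may also replace $(A,B,C)$ by $(-A,-B,-C)$ and $z$ by $-z$ as convenient, so we can normalize signs (say $A\ge 0$, $B\ge 0$) without changing $Y$.

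\textbf{Step 1 (angular problem).} Write $z=re^{i\theta}$ and put $t=\cos\theta\in[-1,1]$. Using $\cos 2\theta=2t^2-1$, a direct expansion gives
\[
|A+Bz+Cz^2|^2=(A-Cr^2)^2+B^2r^2+2Br(A+Cr^2)\,t+4ACr^2\,t^2=:\Phi_r(t).
\]
This is a quadratic in $t$ whose leading coefficient has the sign of $AC$. This is exactly why the statement splits according to the sign of $AC$.

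\textbf{Step 2 (case $AC\ge0$).} Here $\Phi_r$ is convex, so its maximum on $[-1,1]$ is attained at $t=\pm1$, i.e.\ at real $z=\pm r$. Since $A$ and $C$ have the same sign, this maximum equals $(|A|+|B|r+|C|r^2)^2$. It then remains to maximize
\[
g(r)=|A|+|B|r+|C|r^2+1-r^2
\]
on $[0,1]$.
\begin{itemize}
\item If $|C|\ge1$, then $g$ is nondecreasing, so the maximum is $g(1)=|A|+|B|+|C|$. This falls under the first alternative, because $|B|\ge 0\ge 2(1-|C|)$.
\item If $|C|<1$, then $g$ is concave with vertex at $r_0=|B|/(2(1-|C|))$. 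When $r_0\ge1$ the maximum is $g(1)$, giving $|A|+|B|+|C|$. When $r_0<1$ the maximum is $g(r_0)=1+|A|+B^2/(4(1-|C|))$.
\end{itemize}
This proves (i).

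\textbf{Step 3 (case $AC<0$).} Now $\Phi_r$ is concave, with critical point
\[
t_0(r)=-\frac{B(A+Cr^2)}{4ACr}.
\]
If $|t_0(r)|\ge1$, the maximum is again at an endpoint, giving $\big||A+Cr^2|+|B|r\big|^2$. If $|t_0(r)|<1$, the maximum is
\[
\Phi_r(t_0)=(A-Cr^2)^2+B^2r^2-\frac{B^2(A+Cr^2)^2}{4AC}.
\]
This simplifies to
\[
\Phi_r(t_0)=\frac{(|A|+|C|r^2)^2\,(4|AC|+B^2)}{4|AC|}.
\]
So on the interior branch
\[
h(r)=(|A|+|C|r^2)\sqrt{1-\frac{B^2}{4AC}}+1-r^2 .
\]
The final step is to maximize the resulting piecewise function of $r$ on $[0,1]$. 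The endpoint branch splits further according to the sign of $A+Cr^2$, i.e.\ whether $r^2\lessgtr|A/C|$. On the piece where $|A+Cr^2|=|A|-|C|r^2$, the function $|A|-|C|r^2+|B|r+1-r^2$ has vertex at $|B|/(2(1+|C|))$, which produces the value $1+|A|+B^2/(4(1+|C|))$. On the other piece the function is $-|A|+|C|r^2+|B|r+1-r^2$, whose vertex produces $1-|A|+B^2/(4(1-|C|))$. The boundary $r=1$ produces the three values collected in $R(A,B,C)$. Which of these candidates wins is governed by comparing vertex locations with $1$ and with the switching radius $\sqrt{|A/C|}$, together with the requirement $|t_0(r)|<1$ or $\ge1$. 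The condition $-4AC(C^{-2}-1)\lessgtr B^2$ is what records when the switching point lies inside the relevant vertex region.

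\textbf{Main obstacle.} Step 3 is the hard part: the bookkeeping of which branch is active at which $r$, and the verification that the stated inequalities partition all cases correctly. Each individual candidate value is routine calculus. I would first treat $|C|\ge1$ and $|C|<1$ separately, and then check continuity of the answer across the boundaries of the regions as a consistency test.
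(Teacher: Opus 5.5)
Your Steps 1 and 2 are correct and prove (i) completely. In Step 3, the identity $\Phi_r(t_0)=(|A|+|C|r^2)^2\bigl(1-B^2/(4AC)\bigr)$ and your list of candidate values are also right. (The paper gives no proof of this lemma; it quotes it from Choi et al.) However, (ii) is not proved. The ``bookkeeping'' you postpone is the entire content of (ii). Knowing the candidate values and checking continuity across region boundaries does not show, for example, that for $B^2<\min\{4(1+|C|)^2,-4AC(C^{-2}-1)\}$ neither the interior branch nor the outer endpoint piece exceeds $1+|A|+B^2/(4(1+|C|))$. Nor does it show that the inequalities in the statement are the right dividing lines.

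What is missing is a global monotonicity picture of $F(r)=\max_{|z|=r}|A+Bz+Cz^2|+1-r^2$. Take $B\neq0$ (the case $B=0$ is trivial, since then $F=h$ below). Then $|t_0(r)|<1$ holds exactly on an interval $(r_1,r_2)$, where $r_1<\sqrt{|A/C|}<r_2$ are the positive roots of $|BC|r^2\pm4|AC|r-|AB|=0$. So $F$ has three pieces:
\begin{itemize}
\item on $[0,r_1]$ it is the concave function $1+|A|+|B|r-(1+|C|)r^2$;
\item on $[r_1,r_2]$ it equals $h(r)=1+|A|k+(|C|k-1)r^2$, where $k=\sqrt{1-B^2/(4AC)}$;
\item on $[r_2,\infty)$ it equals $1-|A|+|B|r-(1-|C|)r^2$.
\end{itemize}
A short computation shows that the three statements ``$h$ is decreasing'', ``$|B|/(2(1+|C|))<r_1$'' and (for $|C|<1$) ``$|B|/(2(1-|C|))<r_2$'' are each equivalent to $B^2<-4AC(C^{-2}-1)$. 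Note that $|C|k<1$ forces $|C|<1$.

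In that regime, $F$ is unimodal with peak at $|B|/(2(1+|C|))$. This yields the second case, or $F(1)=|A|+|B|-|C|$ if the peak lies at or beyond $1$ (then $r_1>1$). If instead $B^2\ge-4AC(C^{-2}-1)$, then $F$ is nondecreasing on $[0,|B|/(2(1-|C|))]$ and nonincreasing afterwards; if $|C|\ge1$ it is nondecreasing throughout. This yields the first case when $|B|<2(1-|C|)$, and $F(1)$ otherwise.

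Finally, $r_1\ge1\iff|C|(|B|+4|A|)\le|AB|$ and $r_2\le1\iff|AB|\le|C|(|B|-4|A|)$. This is precisely how $R(A,B,C)$ selects the value of $F(1)$ from the three branches. Without this argument, or an equivalent one, your Step 3 is a list of candidates rather than a proof of (ii).
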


\begin{lemma}\label{L3}\cite{MM}
Let $p \in \mathcal{P}$ be given by \eqref{p1}. Then
\[
\left| c_2 - v c_1^2 \right| \le
\begin{cases}
-4v + 2, & v < 0, \\
2, & 0 \leq v \leq 1, \\
4v - 2, & v > 1.
\end{cases}
\]
Moreover, for $v < 0$ or $v > 1$, equality holds if and only if
$\displaystyle h(z) = \frac{1+z}{1-z}$ or one of its rotations. For $0 < v < 1$, equality holds if and only if
$\displaystyle
h(z) = \frac{1+z^2}{1-z^2}$ or one of its rotations.
\end{lemma}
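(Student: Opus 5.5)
The plan is to derive the estimate directly from the Carath\'eodory parametrization in Lemma~\ref{L1}; it reduces everything to an elementary one-variable maximization. By \eqref{c1} and \eqref{c2}, for some $\tau_1,\tau_2\in\overline{\mathbb D}$,
\[
c_2-vc_1^2 = 2\tau_1^2+2(1-|\tau_1|^2)\tau_2-4v\tau_1^2 = 2(1-2v)\tau_1^2+2(1-|\tau_1|^2)\tau_2 .
\]
Writing $t=|\tau_1|^2\in[0,1]$ and using $|\tau_2|\le 1$, the triangle inequality gives
\[
|c_2-vc_1^2|\le 2|1-2v|\,t+2(1-t)=:\Phi(t).
\]
Since $\Phi$ is affine in $t$, its maximum on $[0,1]$ is $\max\{2|1-2v|,\,2\}$.

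Next I will split into the three ranges of $v$.
\begin{itemize}
\item If $0\le v\le 1$, then $|1-2v|\le 1$, so the bound is $2$.
\item If $v<0$, then $|1-2v|=1-2v>1$, so the bound is $2(1-2v)=2-4v$.
\item If $v>1$, then $|1-2v|=2v-1>1$, so the bound is $4v-2$.
\end{itemize}
This proves the inequality.

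For the equality statements I will track when both estimates are attained. If $v<0$ or $v>1$, then $|1-2v|>1$ and $\Phi$ is strictly increasing, so equality forces $t=1$, i.e.\ $\tau_1\in\mathbb T$. By the uniqueness part of Lemma~\ref{L1}, this gives $p(z)=(1+\tau_1z)/(1-\tau_1z)$, a rotation of $(1+z)/(1-z)$. If $0<v<1$, then $|1-2v|<1$ and $\Phi$ is strictly decreasing, so equality forces $\tau_1=0$. It then also requires $|\tau_2|=1$, so that $|2\tau_2|=2$. The second uniqueness statement of Lemma~\ref{L1}, with $\tau_1=0$ and $\tau_2\in\mathbb T$, yields $p(z)=(1+\tau_2z^2)/(1-\tau_2z^2)$, a rotation of $(1+z^2)/(1-z^2)$.

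Conversely, I will check by direct expansion that these functions attain the bounds:
\begin{itemize}
\item for $(1+z)/(1-z)$ we have $c_1=c_2=2$, so $|c_2-vc_1^2|=|2-4v|$;
\item for $(1+z^2)/(1-z^2)$ we have $c_1=0$ and $c_2=2$, so $|c_2-vc_1^2|=2$.
\end{itemize}
Rotations $p(e^{i\theta}z)$ multiply $c_2-vc_1^2$ by $e^{2i\theta}$, so they preserve its modulus.

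I expect no step to be a serious obstacle. The only delicate point is the ``only if'' direction, which rests on the strict monotonicity of $\Phi$ off the endpoints $v=0,1$ together with the uniqueness clauses of Lemma~\ref{L1}. At $v=0$ and $v=1$ the function $\Phi$ is constant, which is why the equality characterization is not claimed there.
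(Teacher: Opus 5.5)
Your proof is correct. The paper gives no proof of this lemma; it is quoted from Ma--Minda. So there is no in-paper argument to compare against. Your derivation from Lemma~\ref{L1}, which the paper already states, turns Lemma~\ref{L3} into a corollary rather than a separate imported result.

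Your argument is also essentially the classical one. Formulas \eqref{c1}--\eqref{c2} say exactly that the Schwarz function $w=(p-1)/(p+1)$ has $w_1=\tau_1$ and $w_2=(1-|\tau_1|^2)\tau_2$. Hence $c_2-vc_1^2=2\bigl(w_2+(1-2v)w_1^2\bigr)$, and your triangle-inequality step is the familiar estimate $|w_2|\le 1-|w_1|^2$. The uniqueness clauses of Lemma~\ref{L1} that you use for the equality cases amount to the equality cases of Schwarz's lemma applied to $w$ and to $w(z)/z$. Your equality analysis is sound: strict monotonicity of $\Phi$ for $v\notin[0,1]$ and for $0<v<1$ forces $t=1$, respectively $t=0$ with $|\tau_2|=1$. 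You are also right that $\Phi$ is constant at $v=0,1$, which is why no characterization is claimed there.

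A small bonus of keeping $t=|c_1|^2/4$ explicit is that your bound reads
\[
|c_2-vc_1^2|\le 2-\tfrac12\bigl(1-|1-2v|\bigr)|c_1|^2 .
\]
This gives the sharpened inequalities $|c_2-vc_1^2|+v|c_1|^2\le 2$ for $0<v\le\tfrac12$ and $|c_2-vc_1^2|+(1-v)|c_1|^2\le 2$ for $\tfrac12\le v<1$. These are the refinement that accompanies this lemma in Ma--Minda's paper.
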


\section{Main Results and Proofs}

\subsection{Sharp bounds for $|\sigma_3(f)(0)|$ and $|\sigma_4(f)(0)|$ for functions in the class $\mathcal{S}_{\leftmoon}^*$}

We begin with the following theorem.

\begin{theorem}\label{thm1}
Let $f \in \mathcal{S}_{\leftmoon}^*$ be of the form \eqref{eq1}. Then the third- and fourth-order Schippers functionals satisfy the sharp estimates
\begin{enumerate}[(i)]
\item $|\sigma_3(f)(0)| \le 3$,\label{i}
\item $|\sigma_4(f)(0)| \le 16\sqrt{\frac{2}{7}}$.\label{ii}
\end{enumerate}
The estimate in \ref{i} is sharp and is attained by a function $f_1 \in \mathcal{S}_{\leftmoon}^*$ corresponding to the Schwarz function $\omega(z)=z^2$.
The estimate in \ref{ii} is also sharp and is attained by a function $f_2 \in \mathcal{S}_{\leftmoon}^*$ corresponding to the Schwarz function
\[
\omega_*(z) = z\left(\frac{\sqrt{2/7}+z}{1+\sqrt{2/7}\,z}\right).
\]
\end{theorem}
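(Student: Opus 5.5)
The plan is to work directly with the Schwarz function $\omega(z)=w_1z+w_2z^2+w_3z^3+\cdots$ in $zf'(z)/f(z)=q(\omega(z))$. Equivalently, one could pass to $p=(1+\omega)/(1-\omega)\in\mathcal P$ and use Lemma~\ref{L1}; the Schwarz form is shorter.

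\textbf{Step 1: coefficients.} Since $q(z)=z+\sqrt{1+z^2}=1+z+\tfrac12 z^2+0\cdot z^3+O(z^4)$, we get
\[
\frac{zf'(z)}{f(z)}=1+w_1z+\Bigl(w_2+\tfrac12 w_1^2\Bigr)z^2+(w_3+w_1w_2)z^3+\cdots .
\]
Comparing with the coefficients of $zf'/f$ computed from \eqref{eq1} gives
\[
a_2=w_1,\qquad a_3=\tfrac12 w_2+\tfrac34 w_1^2,\qquad a_4=\tfrac13\Bigl(w_3+\tfrac52 w_1w_2+\tfrac54 w_1^3\Bigr).
\]
Substituting into \eqref{sg3} and \eqref{sg4} should yield
\[
\sigma_3(f)(0)=3\Bigl(w_2-\tfrac12 w_1^2\Bigr),\qquad
\sigma_4(f)(0)=8\Bigl(w_3-2w_1w_2+\tfrac12 w_1^3\Bigr).
\]

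\textbf{Step 2: part (i).} The Schwarz inequality $|w_2|\le 1-|w_1|^2$ gives
\[
\Bigl|w_2-\tfrac12 w_1^2\Bigr|\le 1-\tfrac12|w_1|^2\le 1.
\]
Hence $|\sigma_3(f)(0)|\le 3$. Equality holds for $\omega(z)=z^2$, since then $w_1=0$ and $w_2=1$. Alternatively, this is Lemma~\ref{L3} with $v=\tfrac12$ applied to $p$.

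\textbf{Step 3: part (ii), reduction.} I will use the standard parametrization of the first three Schwarz coefficients, which is the Schwarz-function form of Lemma~\ref{L1}:
\[
w_1=\zeta_1,\qquad w_2=(1-|\zeta_1|^2)\zeta_2,\qquad
w_3=(1-|\zeta_1|^2)\bigl[(1-|\zeta_2|^2)\zeta_3-\overline{\zeta_1}\zeta_2^2\bigr],
\]
with $\zeta_j\in\overline{\mathbb D}$. By rotation invariance I may take $\zeta_1=t\in[0,1]$. Then
\[
w_3-2w_1w_2+\tfrac12 w_1^3=\tfrac12 t^3+(1-t^2)\bigl[(1-|\zeta_2|^2)\zeta_3-t\zeta_2^2-2t\zeta_2\bigr].
\]
Maximizing over $\zeta_3$ and using the triangle inequality gives, for $t<1$,
\[
\bigl|w_3-2w_1w_2+\tfrac12 w_1^3\bigr|\le (1-t^2)\,Y(A,B,C),\qquad A=\frac{t^3}{2(1-t^2)},\quad B=-2t,\quad C=-t,
\]
with $Y$ as in Lemma~\ref{L2}. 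The case $t=1$ gives the value $\tfrac12$ directly.

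\textbf{Step 4: the main obstacle.} Since $AC<0$, case (ii) of Lemma~\ref{L2} applies, and its subcases depend on $t$. The difficult part is identifying, for each $t\in[0,1)$, which subcase holds, including the expression $R(A,B,C)$, and then maximizing the resulting one-variable function $(1-t^2)Y$ over $t$. My expectation is that the binding regime is the boundary one with $|\zeta_2|=1$, where the bound becomes $g(t)=3t-\tfrac72 t^3$. This $g$ is maximized where $3=\tfrac{21}{2}t^2$, i.e.\ at $t^2=2/7$, with value $2\sqrt{2/7}$. On every other subrange I would need to check that $(1-t^2)Y\le 2\sqrt{2/7}$. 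This is elementary but tedious calculus, and it is the step where errors are most likely.

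\textbf{Step 5: sharpness.} Take $\zeta_1=\sqrt{2/7}$ and $\zeta_2=1$, which is exactly $\omega_*$. Then
\[
w_1=t,\qquad w_2=1-t^2,\qquad w_3=-t(1-t^2),
\]
so
\[
w_3-2w_1w_2+\tfrac12 w_1^3=-3t+\tfrac72 t^3,
\]
whose modulus equals $2\sqrt{2/7}$ at $t=\sqrt{2/7}$. This gives $|\sigma_4(f_2)(0)|=16\sqrt{2/7}$, where $f_2$ is defined by $zf_2'/f_2=q(\omega_*)$.
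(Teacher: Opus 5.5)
Your reduction is the paper's own: the parametrization of $w_1,w_2,w_3$ is Lemma~\ref{L1} rewritten via $p=(1+\omega)/(1-\omega)$, and you arrive at exactly the paper's $A=\frac{t^3}{2(1-t^2)}$, $B=-2t$, $C=-t$. Your part (i) via $|w_2|\le 1-|w_1|^2$ is a fine elementary substitute for Lemma~\ref{L3}; note that in $p$-terms it corresponds to $v=\frac34$, not $\frac12$.

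\textbf{The gap is Step~4.} The upper bound in (ii) is left as an ``expectation,'' and that expectation is false on part of the range. For all $t$ we have $-4AC(C^{-2}-1)=2t^2\le 4t^2=B^2$. Moreover, $|B|<2(1-|C|)$ holds exactly when $t<\frac12$. So the \emph{first} alternative of Lemma~\ref{L2}(ii) governs $0<t<\frac12$. There the maximum is attained at the interior point $\zeta_2=t/(1-t)$, not on $|\zeta_2|=1$, and it gives $(1-t^2)Y=1+\frac{t^3}{2}$. Indeed $1+\frac{t^3}{2}-g(t)=(2t-1)^2(1+t)>0$ for $t<\frac12$. This quantity tends to $1$ as $t\to 0$, matching $\omega(z)=z^3$ with $|\sigma_4(f)(0)|=8$, whereas $g(t)\to 0$. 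Its supremum on $[0,\frac12)$ is $\frac{17}{16}$, which lies below $2\sqrt{2/7}\approx 1.0690$ only by a narrow margin. So this check is the heart of the argument, not routine calculus.

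\textbf{How to close it.}
\begin{enumerate}[(a)]
\item The second alternative of Lemma~\ref{L2}(ii) never occurs, since $B^2=4t^2>2t^2$. Hence for $\frac12\le t<1$ you are in $R(A,B,C)$.
\item The condition $|C|(|B|+4|A|)\le |AB|$ never holds.
\item The condition $|AB|\le |C|(|B|-4|A|)$ holds iff $t^2\le\frac25$. This gives $(1-t^2)Y=g(t)$ on $[\frac12,\sqrt{2/5}]$, with maximum $2\sqrt{2/7}$ at $t^2=\frac27\in[\frac14,\frac25]$.
\item On $(\sqrt{2/5},1)$ the remaining formula gives $(1-t^2)Y=\frac12(2-t^2)^{3/2}<\frac12\bigl(\frac85\bigr)^{3/2}\approx 1.012$.
\item At $t=1$ the value is $\frac12$.
\end{enumerate}
Hence $|\sigma_4(f)(0)|\le 16\sqrt{2/7}$, and your Step~5 then correctly establishes sharpness.

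The paper's own Case~(a) applies $-|A|+|B|+|C|$ on all of $(0,\sqrt{2/5}]$ and overlooks the first alternative on $(0,\frac12)$. Its bound $24x-28x^3$ is not valid for small $x$. The theorem survives only because the correct bound there is $8+4x^3\le 8.5<16\sqrt{2/7}$. Carrying out Step~4 carefully is therefore exactly what is needed, not a formality.
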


\begin{proof}
Let $f\in\mathcal{S}_{\leftmoon}^*$ be given by \eqref{eq1}. Then, by definition of the class, there exists a Schwarz function $\omega\in\Omega$ such that
\[
\frac{zf'(z)}{f(z)}=q(\omega(z)).
\]
Assume that $\omega(z)=\frac{p(z)-1}{p(z)+1}$, where $p \in \mathcal{P}$ is given by \eqref{p1}. After substituting the expressions for $p(z)$, $q(z)$ and $f(z)$ and comparing coefficients of both the sides of the above equation, we obtain the following relation between the coefficients of $f$ and the coefficients of $p(z)$.
\begin{equation}\label{pfaeq1}
a_2=\frac{c_1}{2},\quad a_3=\frac{1}{4}c_2+\frac{1}{16}c_1^2, \quad a_4=\frac{1}{6}c_3+\frac{1}{24}c_1c_2-\frac{1}{96}c_1^3.
\end{equation}

\medskip
\noindent
{\bf (i) Sharp bound of $|\sigma_3(f)(0)|$:} By using equation \eqref{sg3} and \eqref{pfaeq1}, we compute
\[
\sigma_3(f)(0)
=6\left[\left(\frac{c_2}{4}+\frac{c_1^2}{16}\right)-\left(\frac{c_1}{2}\right)^2\right]
=\frac{3}{2}\left(c_2-\frac{3}{4}c_1^2\right).
\]
Since $v=\frac{3}{4}\in(0,1)$, Lemma~\ref{L3} yields
\[
|\sigma_3(f)(0)|
=\frac{3}{2}\left|c_2-\frac{3}{4}c_1^2\right|
\le\frac{3}{2}\cdot 2=3.
\]

\medskip
\noindent
{\bf Sharpness.}
To verify sharpness, consider the function $f_1\in\mathcal{S}_{\leftmoon}^*$ defined by
\begin{equation}\label{ext_f1}
\frac{zf_1'(z)}{f_1(z)}=q(z^2)=z^2+\sqrt{1+z^4}.
\end{equation}
Since $\omega(z)=z^2$ is a Schwarz function, it follows that $f_1\in\mathcal{S}_{\leftmoon}^*$. Integrating \eqref{ext_f1} and by using the Taylor series expansion of $q(z^2)$, we obtain
\[
f_1(z)=z\exp\left(\int_0^z\frac{q(t^2)-1}{t}\,dt\right)=z+\frac{1}{2}z^3+\frac{1}{4}z^5+\cdots.
\]
Consequently, $|\sigma_3(f_1)(0)|=|6(a_3-a_2^2)|=3$, which confirms that the bound is sharp.

\begin{figure}[H]
\centering
\begin{tikzpicture}[scale=1.8]
\begin{scope}[shift={(-2,0)}]
\draw[fill=blue!5, draw=blue!50, thick] (0,0) circle (1);
\foreach \r in {0.2,0.4,0.6,0.8,1.0} \draw[blue!20] (0,0) circle (\r);
\foreach \a in {0,45,90,135,180,225,270,315} \draw[blue!20] (0,0) -- (\a:1);
\draw[->] (-1.2,0) -- (1.2,0) node[right] {\tiny Re};
\draw[->] (0,-1.2) -- (0,1.2) node[above] {\tiny Im};
\node at (0,-1.4) {\small $\mathbb{D}$};
\end{scope}
\begin{scope}[shift={(2,0)}]
\draw[fill=purple!5, draw=purple!50, thick, variable=\t, domain=0:360, samples=100]
plot ({cos(\t)+0.5*cos(3*\t)}, {sin(\t)+0.5*sin(3*\t)});
\foreach \r in {0.3,0.6,0.9}
\draw[purple!20, variable=\t, domain=0:360, samples=60]
plot ({\r*cos(\t)+0.5*\r^3*cos(3*\t)}, {\r*sin(\t)+0.5*\r^3*sin(3*\t)});
\draw[->] (-1.7,0) -- (1.7,0) node[right] {\tiny Re};
\draw[->] (0,-1.7) -- (0,1.7) node[above] {\tiny Im};
\node at (0,-1.9) {\small $f_1(\mathbb{D})$};
\end{scope}
\end{tikzpicture}
\caption{Image of the unit disk $\mathbb{D}$ under the truncated extremal function $f_1(z)\approx z+\frac{1}{2}z^3$. The figure illustrates the qualitative geometric behavior corresponding to the sharp bound $|\sigma_3(f)(0)|=3$.}
\label{fig:mapping}
\end{figure}

The image domain exhibits a two-fold rotational symmetry about the origin, which is consistent with the choice $\omega(z)=z^2$. The observed deformation is governed by the third-order coefficient $a_3=\tfrac{1}{2}$. It should be noted that this visualization is based on a truncated series expansion and is intended only to capture the qualitative features of the mapping.

The resulting image domain displays a characteristic bi-lobed (or propeller-shaped) boundary. The inward bending along the diagonal directions arises from the comparatively large value of the coefficient $a_3 = 1/2$. Furthermore, the deformation of radial segments from $\mathbb{D}$ indicates that the image remains starlike with respect to the origin, thereby confirming that $f_1 \in \mathcal{S}_{\leftmoon}^*$.

\medskip
\noindent
{\bf (ii) Sharp bound of $|\sigma_4(f)(0)|$:} Substituting the values of $a_2$, $a_3$, and $a_4$ from \eqref{pfaeq1} into \eqref{sg4} and simplifying, we obtain
\[
\sigma_4(f)(0)=4c_3-8c_1c_2+\frac{7}{2}c_1^3.
\]
Now, applying Lemma~\ref{L1} in the above equation, we get
\begin{equation}
\sigma_4(f)(0)=4\tau_1^3-16(1-|\tau_1|^2)\tau_1\tau_2
-8(1-|\tau_1|^2)\overline{\tau_1}\tau_2^2
+8(1-|\tau_1|^2)(1-|\tau_2|^2)\tau_3.
\end{equation}
By rotational invariance of the unit disk, we may assume that $\tau_1=x\in[0,1]$. Then
\begin{equation}\label{G11}
\sigma_4(f)(0)
=4x^3
+8(1-x^2)\left[(1-|\tau_2|^2)\tau_3-2x\tau_2-x\tau_2^2\right].
\end{equation}
If $x=1$, then $|\sigma_4(f)(0)|=4$. If $x=0$, then $|\sigma_4(f)(0)|\le8$. Therefore, we restrict our attention to the case $0<x<1$. Applying the triangle inequality to \eqref{G11}, together with the fact that $|\tau_3|\le1$, we obtain
\begin{equation}\label{G12}
|\sigma_4(f)(0)|
\le8(1-x^2)\left(|A+B\tau_2+C\tau_2^2|+1-|\tau_2|^2\right),
\end{equation}
where
\[
A=\frac{x^3}{2(1-x^2)},\qquad
B=-2x,\qquad
C=-x.
\]
Since $AC<0$ for $x\in(0,1)$, Lemma~\ref{L2}(ii) is applicable.

\medskip
\noindent
{\textbf{Case (a)}}: Let $x\in(0, \sqrt{\frac{2}{5}}]$. In this interval it is easy to verify that $|AB|\le|C|(|B|-4|A|)$. Therefore,
\[
|\sigma_4(f)(0)| \le 8(1-x^2)(-|A|+|B|+|C|)=8(1-x^2)\left(3x-\frac{x^3}{2(1-x^2)}\right)
=24x-28x^3.
\]
Let $h(x)=24x-28x^3$. Then $h'(x)=24-84x^2$, which vanishes at $x^2=\frac{2}{7}$, a value lying within the admissible range $0<x\le\sqrt{\frac{2}{5}}$. Also, we have $h(0)=0$ and $h(\sqrt{\frac{2}{5}})=\frac{64\sqrt{2}}{5\sqrt{5}}$. Hence,
\[
\max_{x\in (0, \sqrt{\frac{2}{5}}]} h(x)=h\!\left(\sqrt{\tfrac{2}{7}}\right)
=16\sqrt{\tfrac{2}{7}}.
\]

\medskip
\noindent
{\textbf{Case (b)}}: Let $x\in(\sqrt{\frac{2}{5}},1)$. For these values of $x$, neither $|C|(|B|+4|A|)\leq |AB|$ nor $|AB|\leq |C|(|B|-4|A|)$ holds. Therefore, Lemma~\ref{L2}(ii) gives us
\begin{equation}\label{eq:R_case_e}
|\sigma_4(f)(0)|
\le8(1-x^2)(|C|+|A|)\sqrt{1-\frac{B^2}{4AC}}=8(1-x^2)\left[\frac{(2-x^2)^{3/2}}{2(1-x^2)}\right]
=4(2-x^2)^{3/2}.
\end{equation}
Let $g(x)=4(2-x^2)^{3/2}$. Then the function $g$ is strictly decreasing over the interval $\bigl(\sqrt{\tfrac{2}{5}}, 1\bigr)$. Moreover,
\[
\lim_{x\to\sqrt{\tfrac{2}{5}}^{+}}g(x)
=4(1.6)^{3/2}\approx8.096,\qquad
\lim_{x\to1^{-}}g(x)=4.
\]
Therefore, by combining the estimates obtained in Cases (a) and (b), it follows that
\[
|\sigma_4(f)(0)|\le16\sqrt{\frac{2}{7}}\approx8.5523
\]
for all $f\in \mathcal{S}_{\leftmoon}^*$.

\medskip
\noindent
{\bf Sharpness.}\quad
To verify the sharpness of the obtained estimate, we construct a function $f_2\in\mathcal{S}_{\leftmoon}^*$ for which equality holds.
From the proof of the second part of the theorem, it follows that the maximum is attained when $x=|\tau_1|=\sqrt{2/7}$. Moreover, in order to attain equality in the triangle inequality used in the proof, we must have $|\tau_2|=1,\qquad|\tau_3|=1$.
By rotational invariance, we may choose
\[
\tau_1=\sqrt{\frac{2}{7}},\qquad \tau_2=1,\qquad \tau_3=1.
\]
The corresponding Schwarz function is given by the Blaschke product
\[
\omega_*(z)=z\left(\frac{\sqrt{2/7}+z}{1+\sqrt{2/7}\,z}\right)
=\frac{\sqrt{2}\,z+\sqrt{7}\,z^2}{\sqrt{7}+\sqrt{2}\,z}.
\]
The first few Taylor coefficients of $\omega_*$ are
\[
\omega_1=\sqrt{\frac{2}{7}},\qquad
\omega_2=\frac{5}{7},\qquad
\omega_3=-\frac{5}{7}\sqrt{\frac{2}{7}}.
\]
The associated extremal function $f_2\in\mathcal{S}_{\leftmoon}^*$ satisfies
\[
\frac{zf_2'(z)}{f_2(z)}
=q(\omega_*(z))
=\omega_*(z)+\sqrt{1+\omega_*^2(z)}.
\]
Integrating, we obtain
\[
f_2(z)
=z\exp\left(\int_0^z\frac{\omega_*(t)+\sqrt{1+\omega_*^2(t)}-1}{t}\,dt\right).
\]
From this representation, the initial coefficients are given by
\[
a_2=\sqrt{\frac{2}{7}},\qquad
a_3=\frac{4}{7},\qquad
a_4=\frac{10}{21}\sqrt{\frac{2}{7}}.
\]
Consequently, we compute
\[
\sigma_4(f_2)(0)
=24\left(
\frac{10}{21}\sqrt{\frac{2}{7}}
-\frac{12}{7}\sqrt{\frac{2}{7}}
+\frac{4}{7}\sqrt{\frac{2}{7}}
\right)
=-16\sqrt{\frac{2}{7}}.
\]
Hence,
\[
|\sigma_4(f_2)(0)|=16\sqrt{\frac{2}{7}},
\]
which establishes the sharpness of the second part of the theorem. This completes the proof.

\begin{figure}[H]
\centering
\begin{tikzpicture}[>=stealth, scale=1]
\begin{scope}[xshift=-4cm]
    \fill[gray!10] (0,0) circle (2);
    \foreach \r in {0.5,1.0,1.5}
        \draw[gray!30, thin] (0,0) circle (\r);
    \foreach \a in {0,30,...,330}
        \draw[gray!30, thin] (0,0) -- (\a:2);
    \draw[black, thick] (0,0) circle (2);
    \draw[->, thin] (-2.5,0) -- (2.5,0) node[right] {Re$(z)$};
    \draw[->, thin] (0,-2.5) -- (0,2.5) node[above] {Im$(z)$};
    \node at (0,-2.5) {$\mathbb{D}$};
\end{scope}
\begin{scope}[xshift=4cm]
    \coordinate (V1) at (3,0);
    \coordinate (V2) at (-1.5,0);
    \coordinate (C1) at (1,1.6);
    \coordinate (C2) at (1,-1.6);
    \fill[blue!10]
        (V1) to[bend left=30] (C1)
             to[bend left=20] (V2)
             to[bend left=20] (C2)
             to[bend left=30] (V1);
    \draw[blue, thick]
        (V1) to[bend left=30] (C1)
             to[bend left=20] (V2)
             to[bend left=20] (C2)
             to[bend left=30] (V1);
    \foreach \r in {0.3,0.6,0.9}{
        \draw[gray!30]
        ({\r*3},0)
        to[bend left=25] ({\r*1},{\r*1.5})
        to[bend left=15] ({-\r*1.2},0)
        to[bend left=15] ({\r*1},{-\r*1.5})
        to[bend left=25] ({\r*3},0);
    }
    \fill (0,0) circle (1.5pt) node[below right] {$0$};
    \draw[->, thin] (-2,0) -- (4,0) node[right] {Re$(w)$};
    \draw[->, thin] (0,-2.5) -- (0,2.5) node[above] {Im$(w)$};
    \node at (1,-2.3) {$f_2(\mathbb{D})$};
\end{scope}
\end{tikzpicture}
\caption{Conformal image of the unit disk $\mathbb{D}$ under the extremal function $f_2$. The image domain is a starlike lune-shaped region corresponding to the sharp bound $|\sigma_4(f)(0)| = 16\sqrt{2/7}$.}
\label{fig:extremal_sigma4}
\end{figure}

The left-hand side represents the unit disk $\mathbb{D} = \{z \in \mathbb{C} : |z| < 1\}$, equipped with a polar grid of concentric circles and radial segments, which serve as reference curves for visualizing the conformal mapping.

The image domain $f_2(\mathbb{D})$ is a starlike region bounded by a lune, formed by the intersection of two analytic arcs. The origin serves as the center of starlikeness, ensuring that each boundary point is accessible via a radial segment, thereby confirming that $f_2 \in \mathcal{S}_{\leftmoon}^*$.

A notable feature of this extremal mapping is the absence of rotational symmetry. Since the coefficient $a_2 = \sqrt{2/7} \approx 0.5345$ is positive, the image domain is elongated in the direction of the positive real axis. As a consequence, the distance from the origin to the right vertex $V_1$ (corresponding to $z=1$) exceeds that to the left vertex $V_2$ (corresponding to $z=-1$).

\end{proof}

\subsection{Sharp bounds for $|\sigma_3(f)(0)|$ and $|\sigma_4(f)(0)|$ for functions in the class $\mathcal{C}_{\leftmoon}$}

The next theorem provides the best possible upper bounds for $|\sigma_3(f)(0)|$ and $|\sigma_4(f)(0)|$ whenever $f\in\mathcal{C}_{\leftmoon}$.

\begin{theorem}\label{thm2}
Let $f\in\mathcal{C}_{\leftmoon}$ be given by \eqref{eq1}. Then the third- and fourth-order Schippers functionals satisfy the sharp estimates
\begin{enumerate}[(i)]
\item $|\sigma_3(f)(0)|\le1$,\label{1}
\item $|\sigma_4(f)(0)|\le2$.\label{2}
\end{enumerate}
The estimate in \ref{1} is sharp and is attained by a function $f_3\in\mathcal{C}_{\leftmoon}$ corresponding to the Schwarz function $\omega(z)=z^2$. The estimate in \ref{2} is also sharp and is attained by a function $f_4\in\mathcal{C}_{\leftmoon}$ corresponding to the Schwarz function $\omega(z)=z^3$.
\end{theorem}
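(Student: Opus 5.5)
The plan is to follow the scheme of Theorem~\ref{thm1}, starting from $1+\frac{zf''(z)}{f'(z)}=q(\omega(z))$ and working directly with the coefficients $\omega_1,\omega_2,\omega_3$ of $\omega$ before passing to $p\in\mathcal P$.

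\textbf{Coefficient identities.} Since $q(w)=1+w+\tfrac12 w^2+0\cdot w^3+\cdots$, we have
\[
q(\omega(z))-1=b_1z+b_2z^2+b_3z^3+\cdots,\qquad b_1=\omega_1,\quad b_2=\omega_2+\tfrac12\omega_1^2,\quad b_3=\omega_3+\omega_1\omega_2 .
\]
Comparing with $z f''/f'=2a_2z+(6a_3-4a_2^2)z^2+(12a_4-18a_2a_3+8a_2^3)z^3+\cdots$, I would eliminate $a_2,a_3,a_4$ to get
\[
\sigma_3(f)(0)=b_2-\tfrac12 b_1^2,\qquad
\sigma_4(f)(0)=2b_3-3b_1b_2+b_1^3 .
\]
Then I substitute $\omega=(p-1)/(p+1)$, that is $\omega_1=c_1/2$, $\omega_2=c_2/2-c_1^2/4$, $\omega_3=c_3/2-c_1c_2/2+c_1^3/8$. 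This gives
\[
\sigma_3(f)(0)=\tfrac12\Bigl(c_2-\tfrac12c_1^2\Bigr),\qquad
\sigma_4(f)(0)=c_3-\tfrac54c_1c_2+\tfrac5{16}c_1^3 .
\]

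\textbf{Part (i).} Lemma~\ref{L3} with $v=\tfrac12\in(0,1)$ gives $|\sigma_3(f)(0)|\le 1$ at once. Equality holds for $p(z)=(1+z^2)/(1-z^2)$, which corresponds to $\omega(z)=z^2$. The extremal function $f_3$ is therefore defined by $1+zf_3''/f_3'=q(z^2)$; it has $a_2=0$ and $a_3=\tfrac16$, so $|\sigma_3(f_3)(0)|=1$.

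\textbf{Part (ii).} Insert Lemma~\ref{L1} and rotate so that $\tau_1=x\in[0,1]$. A direct expansion should give
\[
\sigma_4(f)(0)=-\tfrac12x^3+(1-x^2)\bigl[-x\tau_2-2x\tau_2^2+2(1-|\tau_2|^2)\tau_3\bigr].
\]
Using $|\tau_3|\le1$, this yields
\[
|\sigma_4(f)(0)|\le 2(1-x^2)\bigl(|A+B\tau_2+C\tau_2^2|+1-|\tau_2|^2\bigr),\qquad A=-\frac{x^3}{4(1-x^2)},\ B=-\frac x2,\ C=-x .
\]
Here $AC\ge0$, so Lemma~\ref{L2}(i) applies (the endpoints $x=0,1$ are handled directly).

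For $x<\tfrac45$ we have $|B|<2(1-|C|)$, and the bound becomes
\[
2(1-x^2)+\tfrac12x^3+\tfrac18x^2(1+x)=2-\tfrac{15}{8}x^2+\tfrac58x^3\le2 .
\]
For $x\ge\tfrac45$ the bound is $3x-\tfrac52x^3$. This function is decreasing on $[\tfrac45,1]$ and equals $1.12$ at $x=\tfrac45$, so it stays below $2$. Hence $|\sigma_4(f)(0)|\le2$.

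Equality forces $x=0$, $\tau_2=0$ and $|\tau_3|=1$, which means $\omega(z)=z^3$ (so $p(z)=(1+z^3)/(1-z^3)$). For $f_4$ defined by $1+zf_4''/f_4'=q(z^3)$ we get $a_2=a_3=0$ and $a_4=\tfrac1{12}$, so $\sigma_4(f_4)(0)=2$.

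The main obstacle is the algebra producing the $\tau$-expression for $\sigma_4(f)(0)$, together with checking which branch of Lemma~\ref{L2}(i) applies on each range of $x$. I would double-check that expression against $\sigma_4=2\omega_3-\omega_1\omega_2-\tfrac12\omega_1^3$.
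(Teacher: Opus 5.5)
Your proposal is correct and follows essentially the paper's proof. You obtain the same formulas $\sigma_3(f)(0)=\tfrac12(c_2-\tfrac12c_1^2)$ and $\sigma_4(f)(0)=c_3-\tfrac54c_1c_2+\tfrac5{16}c_1^3$, use Lemma~\ref{L3} with $v=\tfrac12$ for (i), and for (ii) use Lemma~\ref{L1} and Lemma~\ref{L2}(i) with the same $A,B,C$, the same split at $x=\tfrac45$, and the same extremals from $\omega(z)=z^2$ and $\omega(z)=z^3$. The only differences are bookkeeping: you pass through the coefficients of $zf''/f'$, which gives the neat identities $\sigma_3(f)(0)=\omega_2$ and $\sigma_4(f)(0)=2\omega_3-\omega_1\omega_2-\tfrac12\omega_1^3$, rather than first solving for $a_2,a_3,a_4$; and your equality analysis forcing $x=0$, $\tau_2=0$, $|\tau_3|=1$ is slightly more explicit than the paper's.
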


\begin{proof}
Let $f\in\mathcal{C}_{\leftmoon}$ be given by \eqref{eq1}. By the definition of the class $\mathcal{C}_{\leftmoon}$, there exists a Schwarz function $\omega\in\Omega$ such that
\[
1+\frac{zf''(z)}{f'(z)}=q(\omega(z)).
\]
Let $\omega(z)=\sum_{n=1}^{\infty}\omega_n z^n, ~~ z\in \mathbb{D}$. Substituting the Taylor series expansion of $\omega(z)$, $q(z)$ and $f(z)$ into the above equation and comparing the coefficients of like powers of $z$, we obtain

\begin{equation}\label{conv_a2_final}
a_2=\frac{1}{2}\omega_1,
\end{equation}

\begin{equation}\label{conv_a3_final}
a_3=\frac{1}{6}\omega_2+\frac{1}{4}\omega_1^2,
\end{equation}
and
\begin{equation}\label{conv_a4_final}
a_4=\frac{1}{12}\omega_3+\frac{5}{24}\omega_1\omega_2+\frac{5}{48}\omega_1^3.
\end{equation}

Since $\omega(z)$ is a Schwarz function, there exists a function $p \in \mathcal{P}$ given by \eqref{p1} such that $p(z)=\frac{1+\omega(z)}{1-\omega(z)}$. This relation yields the following connections between the Taylor coefficients of $\omega(z)$ and $p(z)$:
\[
\omega_1=\frac{c_1}{2},\quad
\omega_2=\frac{c_2}{2}-\frac{c_1^2}{4},\quad
\omega_3=\frac{c_3}{2}-\frac{c_1c_2}{2}+\frac{c_1^3}{8}.
\]
Substituting these expressions into \eqref{conv_a2_final}--\eqref{conv_a4_final}, we obtain the following relations between the coefficients of $f(z)$ and those of $p(z)$:
\begin{equation}\label{conv_coeff}
a_2=\frac{c_1}{4},\quad
a_3=\frac{c_2}{12}+\frac{c_1^2}{48},\quad
a_4=\frac{c_3}{24}+\frac{c_1c_2}{96}-\frac{c_1^3}{384}.
\end{equation}

\smallskip
\noindent
{\bf (i) Sharp bound for $|\sigma_3(f)(0)|$.} Using the relation $\sigma_3(f)(0)=6(a_3-a_2^2)$ together with \eqref{conv_coeff}, we obtain
\[
\sigma_3(f)(0)=\frac{1}{2}\left(c_2-\frac{1}{2}c_1^2\right).
\]
Now, applying Lemma~\ref{L3} with $v=\frac{1}{2}\in(0,1)$, we get
\[
|\sigma_3(f)(0)|=\frac{1}{2}\left|c_2-\frac{1}{2}c_1^2 \right|\le 1.
\]

\noindent
{\bf Sharpness.}\quad
To show that the estimate is sharp, consider the function $f_3\in\mathcal{C}_{\leftmoon}$ defined by
\begin{equation}\label{ext_conv_sub}
1+\frac{zf_3''(z)}{f_3'(z)}=q(z^2),\quad z\in\mathbb{D}.
\end{equation}
Since $\omega(z)=z^2$ is a Schwarz function, it follows that $f_3\in\mathcal{C}_{\leftmoon}$. Proceeding as above, we obtain
\[
f_3(z)=z+\frac{1}{6}z^3+\frac{1}{20}z^5+\cdots,
\]
so that $a_2=0$ and $a_3=\frac{1}{6}$. Consequently,
\[
|\sigma_3(f_3)(0)|=1,
\]
which establishes the sharpness.

\begin{figure}[H]
\centering
\begin{tikzpicture}[scale=1.8]
\begin{scope}[shift={(-2.5,0)}]
\draw[fill=blue!5, draw=blue!50, thick] (0,0) circle (1);
\foreach \r in {0.2,0.4,0.6,0.8,1.0} \draw[blue!20] (0,0) circle (\r);
\foreach \a in {0,30,...,330} \draw[blue!20] (0,0) -- (\a:1);
\draw[->] (-1.2,0) -- (1.2,0) node[right] {\tiny Re};
\draw[->] (0,-1.2) -- (0,1.2) node[above] {\tiny Im};
\node at (0,-1.4) {\small $\mathbb{D}$};
\end{scope}
\draw[->, thick] (-0.7,0) -- (0.7,0) node[midway, above] {$f_3$};
\begin{scope}[shift={(2.5,0)}]
\draw[fill=green!5, draw=green!50, thick, variable=\t, domain=0:360, samples=120]
plot ({cos(\t) + (1/6)*cos(3*\t)}, {sin(\t) + (1/6)*sin(3*\t)});
\foreach \r in {0.3,0.6,0.9}
\draw[green!20, variable=\t, domain=0:360, samples=80]
plot ({\r*cos(\t) + (1/6)*\r^3*cos(3*\t)},
      {\r*sin(\t) + (1/6)*\r^3*sin(3*\t)});
\draw[->] (-1.5,0) -- (1.5,0) node[right] {\tiny Re};
\draw[->] (0,-1.5) -- (0,1.5) node[above] {\tiny Im};
\node at (0,-1.7) {\small $f_3(\mathbb{D})$};
\end{scope}
\end{tikzpicture}
\caption{Image of the unit disk $\mathbb{D}$ under the truncated extremal lune-convex function $f_3(z) \approx z + \frac{1}{6}z^3$. The domain illustrates the geometric rigidity of the convex class, where the two-fold symmetric deformation is significantly milder than the starlike case, corresponding to the sharp bound $|\sigma_3(f)(0)|=1$.}
\label{fig:convex_mapping}
\end{figure}

The image domain $f_3(\mathbb{D})$ represents a convex deformation of the unit disk. Compared to the lune-starlike extremal case, the distortion is significantly milder, reflecting the smaller coefficient $a_3 = \tfrac{1}{6}$. The boundary exhibits a smooth two-fold rotational symmetry, consistent with the odd-powered Taylor expansion $f_3(z) = z + \frac{1}{6}z^3 + \frac{1}{20}z^5 + \cdots$. This geometric rigidity ensures that the image remains convex, with its boundary curvature remaining strictly positive, and directly illustrates the attainment of the sharp bound $|\sigma_3(f)(0)| = 1$.

\smallskip
\noindent
{\bf (ii) Sharp bound for $|\sigma_4(f)(0)|$:} Using \eqref{sg4} together with \eqref{conv_coeff}, the fourth-order Schippers functional can be written as
\begin{equation}
\sigma_4(f)(0)=c_3-\frac{5}{4}c_1c_2+\frac{5}{16}c_1^3.
\end{equation}
Now, applying the parametrization given in Lemma~\ref{L1}, and assuming without loss of generality that $\tau_1=x\in[0,1]$, we obtain
\begin{equation}\label{sig4_param}
\sigma_4(f)(0)
=-\frac{1}{2}x^3-(1-x^2)x\tau_2-2(1-x^2)x\tau_2^2
+2(1-x^2)(1-|\tau_2|^2)\tau_3.
\end{equation}
For $x=1$, we have $|\sigma_4(f)(0)|=\tfrac{1}{2}$, while for $x=0$, it follows that $|\sigma_4(f)(0)|=|2\tau_3|\le2$. Hence, it suffices to consider the case $0<x<1$. Applying the triangle inequality to \eqref{sig4_param}, together with $|\tau_3|\le1$, we obtain
\[
|\sigma_4(f)(0)|
\le2(1-x^2)\left[\left|\frac{-x^3}{4(1-x^2)}-\frac{x}{2}\tau_2-x\tau_2^2\right|+1-|\tau_2|^2\right].
\]
Let
\[
A=\frac{-x^3}{4(1-x^2)},\quad B=-\frac{x}{2},\quad C=-x.
\]
Since $AC\ge0$, Lemma~\ref{L2}(i) is applicable.

\textbf{Case a:} Let $x\in (0,\tfrac{4}{5})$. Then $|B|<2(1-|C|)$, and hence Lemma~\ref{L2}(i) yields
\[
|\sigma_4(f)(0)|\le 2(1-x^2)\left(1+|A|+\frac{B^2}{4(1-|C|)}\right)
=: h(x),
\]
where
\[
h(x)=2(1-x^2)\left(1+\frac{x^3}{4(1-x^2)}+\frac{x^2}{16(1-x)}\right)
= 2-\frac{15}{8}x^2+\frac{5}{8}x^3.
\]
Differentiating, we obtain
\[
h'(x)=\frac{15}{8}x(x-2)<0\quad \text{for } x\in(0,1),
\]
This shows that $h$ is strictly decreasing on $[0,\frac{4}{5})$. Hence,
\[
\max_{x\in[0,\tfrac{4}{5})}h(x)=h(0)=2,
\]
and consequently $|\sigma_4(f)(0)|\le 2$.

\medskip
\noindent
\textbf{Case b:} Let $x\in[\frac{4}{5},1)$. In this range, the condition $|B|\ge2(1-|C|)$ is satisfied. Hence, applying Lemma~\ref{L2}(i), we obtain
\[
|\sigma_4(f)(0)|\le 2(1-x^2)(|A|+|B|+|C|):=g(x),
\]
where
\[
g(x)=2(1-x^2)\left(\frac{x^3}{4(1-x^2)}+\frac{x}{2}+x\right)
=3x-\frac{5}{2}x^3.
\]
A simple computation shows that
\[
g'(x)=3-\frac{15}{2}x^2<0\quad\text{for }x\in[\tfrac{4}{5},1),
\]
and therefore $g$ is strictly decreasing on this interval. Hence, the maximum is attained at $x=\tfrac{4}{5}$, and $g\!\left(\tfrac{4}{5}\right)=1.12<2$.
Combining both cases, we conclude that
\[
|\sigma_4(f)(0)|\le2.
\]

\medskip
\noindent
{\bf Sharpness.} The sharp bound is attained when $c_1=c_2=0$ and $|c_3|=2$, which corresponds to the Schwarz function $\omega(z)=z^3$. The associated extremal function $f_4\in\mathcal{C}_{\leftmoon}$ satisfies
\begin{equation}\label{ext_conv_sig4}
1+\frac{zf_4''(z)}{f_4'(z)}=q(z^3),\quad z\in\mathbb{D}.
\end{equation}
Proceeding as in the previous case, we obtain
\[
f_4(z)=z+\frac{1}{12}z^4+\frac{5}{252}z^7+\cdots,
\]
which yields $a_2=a_3=0$ and $a_4=\tfrac{1}{12}$. Consequently,
\[
\sigma_4(f_4)(0)=24\left(a_4-3a_2a_3+2a_2^3\right)=2,
\]
thereby establishing the sharpness of the bound.

\begin{figure}[H]
\centering
\begin{tikzpicture}[scale=1.8]
\begin{scope}[shift={(-2,0)}]
    \draw[fill=blue!5, draw=blue!60, thick] (0,0) circle (1);
    \foreach \r in {0.25,0.5,0.75,1}
        \draw[blue!20, thin] (0,0) circle (\r);
    \foreach \a in {0,30,...,330}
        \draw[blue!20, thin] (0,0) -- (\a:1);
    \draw[->, thin] (-1.2,0) -- (1.2,0) node[right] {\tiny Re};
    \draw[->, thin] (0,-1.2) -- (0,1.2) node[above] {\tiny Im};
    \node at (0,-1.4) {\small $\mathbb{D}$};
\end{scope}
\draw[->, ultra thick] (-0.5,0) -- (0.5,0) node[midway, above] {$f_4$};
\begin{scope}[shift={(2,0)}]
    \draw[fill=purple!5, draw=purple!70, thick, variable=\t, domain=0:360, samples=200]
    plot ({cos(\t) + (1/12)*cos(4*\t)}, {sin(\t) + (1/12)*sin(4*\t)});
    \foreach \r in {0.4,0.7,0.9}{
        \draw[purple!25, thin, variable=\t, domain=0:360, samples=150]
        plot ({\r*cos(\t) + (1/12)*(\r^4)*cos(4*\t)},
              {\r*sin(\t) + (1/12)*(\r^4)*sin(4*\t)});
    }
    \draw[->, thin] (-1.5,0) -- (1.5,0) node[right] {\tiny Re};
    \draw[->, thin] (0,-1.5) -- (0,1.5) node[above] {\tiny Im};
    \node at (0,-1.7) {\small $f_4(\mathbb{D})$};
\end{scope}
\end{tikzpicture}
\caption{Conformal mapping of the unit disk $\mathbb{D}$ under the truncated extremal function
$f_4(z) \approx z + \frac{1}{12}z^4$, corresponding to the sharp bound
$|\sigma_4(f)(0)| = 2$ in the class $\mathcal{C}_{\leftmoon}$.
The resulting image domain exhibits a smooth four-fold rotational symmetry and
strict boundary convexity, illustrating the structural differences between
the convex and starlike lune-related classes.}
\label{fig:conv_extremal_sig4}
\end{figure}
\end{proof}

\subsection{Sharp bounds for $|\sigma_3(f)(0)|$ and $|\sigma_4(f)(0)|$ for functions in the class $\mathcal{BT}_{\mathfrak{B}}$}

In the following theorem, we obtain the best possible upper bounds for $|\sigma_3(f)(0)|$ and $|\sigma_4(f)(0)|$ for functions in the class $\mathcal{BT}_{\mathfrak{B}}$. We also discuss the corresponding extremal functions.

\begin{theorem}\label{thm3}
Let $f \in \mathcal{BT}_{\mathfrak{B}}$ be given by \eqref{eq1}. Then the sharp bounds for the third- and fourth-order Schwarzian derivatives at the origin are
\[
|\sigma_3(f)(0)| \le 1,
\qquad
|\sigma_4(f)(0)| \le 3.
\]
The bound $|\sigma_3(f)(0)|=1$ is attained by
\[
f_5(z)=\int_0^z
\sqrt{1+\tanh\!\left(\frac{\zeta^2}{1+\zeta^2}\right)}\, d\zeta,
\]
and the bound $|\sigma_4(f)(0)|=3$ is attained by
\[
f_6(z)=\int_0^z
\sqrt{1+\tanh(\zeta^3)}\, d\zeta.
\]
\end{theorem}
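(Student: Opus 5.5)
The plan is to follow the scheme of Theorems~\ref{thm1} and~\ref{thm2}: express $a_2,a_3,a_4$ through the coefficients of a Schwarz function, pass to the Carath\'eodory coefficients $c_1,c_2,c_3$, and then use Lemma~\ref{L3} for $\sigma_3$ and Lemmas~\ref{L1}--\ref{L2} for $\sigma_4$.

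\textbf{Coefficients.} Write $f'(z)=\mathfrak{B}(\omega(z))$ with $\omega(z)=\sum\omega_nz^n\in\Omega$. First I expand $\mathfrak B$. Since $1+\tanh z=1+z-\tfrac13z^3+\cdots$, the square root gives
\[
\mathfrak{B}(z)=1+\tfrac12 z-\tfrac18 z^2-\tfrac{5}{48}z^3+\cdots .
\]
Comparing coefficients in $f'=\mathfrak B\circ\omega$ then yields
\[
a_2=\tfrac14\omega_1,\qquad a_3=\tfrac16\omega_2-\tfrac1{24}\omega_1^2,\qquad a_4=\tfrac18\omega_3-\tfrac1{16}\omega_1\omega_2-\tfrac{5}{192}\omega_1^3 .
\]
Substituting into \eqref{sg3} and \eqref{sg4} gives
\[
\sigma_3(f)(0)=\omega_2-\tfrac58\omega_1^2,\qquad \sigma_4(f)(0)=3\omega_3-\tfrac92\omega_1\omega_2+\tfrac78\omega_1^3 .
\]
I then pass to $p=(1+\omega)/(1-\omega)\in\mathcal P$, using the same relations $\omega_1=c_1/2$, $\omega_2=c_2/2-c_1^2/4$, $\omega_3=c_3/2-c_1c_2/2+c_1^3/8$ as in the proof of Theorem~\ref{thm2}.

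\textbf{Bound for $\sigma_3$.} In terms of $c_1,c_2$ this becomes $\sigma_3(f)(0)=\tfrac12\bigl(c_2-\tfrac{13}{16}c_1^2\bigr)$. Since $v=\tfrac{13}{16}\in(0,1)$, Lemma~\ref{L3} gives $|\sigma_3(f)(0)|\le1$. Equality requires $c_1=0$ and $|c_2|=2$, that is, $\omega(z)=z^2$ up to rotation. Then $a_2=0$, $a_3=\tfrac16$, and $|\sigma_3|=1$. I will check that $f_5$ as stated has these initial coefficients; otherwise I will use $\int_0^z\mathfrak B(\zeta^2)\,d\zeta$ as the extremal function.

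\textbf{Bound for $\sigma_4$.} The expression becomes
\[
\sigma_4(f)(0)=\tfrac32c_3+\alpha c_1c_2+\beta c_1^3
\]
for explicit rationals $\alpha,\beta$ obtained from the substitution above. I insert Lemma~\ref{L1}, normalize $\tau_1=x\in[0,1]$, and bound $|\tau_3|\le1$. This gives
\[
|\sigma_4(f)(0)|\le 3(1-x^2)\bigl(|A+B\tau_2+C\tau_2^2|+1-|\tau_2|^2\bigr),
\]
with $A,B,C$ explicit in $x$ (with $A$ carrying the factor $1/(1-x^2)$). The endpoint cases are easy: $x=0$ gives $\le3$, and $x=1$ gives a small constant. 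For $0<x<1$, I determine the sign of $AC$ and apply the relevant branch of Lemma~\ref{L2}. This produces one-variable functions of $x$ on subintervals, and I must show each is at most $3$, with the maximum at $x=0$, as happened in Theorem~\ref{thm2}. For sharpness I take $\omega(z)=z^3$, i.e.\ $c_1=c_2=0$, $c_3=2$. Then $a_2=a_3=0$ and $a_4=\tfrac1{8}$, so $\sigma_4=24\cdot\tfrac18=3$; this is exactly the function $f_6$.

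\textbf{Main obstacle.} The hardest step is the case analysis in Lemma~\ref{L2}. I need to locate the threshold values of $x$ where the branches switch, and verify that the resulting polynomial or radical functions of $x$ stay below $3$, including near the switching points. If $AC<0$, the $R(A,B,C)$ branch, with its square root, is where I expect the most work. There I would first try to show monotonicity directly by differentiation. Failing that, I would compare with the crude triangle bound $|A|+|B|+|C|$ on those subintervals.
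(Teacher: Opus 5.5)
Your overall plan is the paper's plan, and your coefficient formulas are correct. They give $\sigma_3(f)(0)=\tfrac12(c_2-\tfrac{13}{16}c_1^2)$ and $\sigma_4(f)(0)=\tfrac32c_3-\tfrac{21}{8}c_1c_2+\tfrac{67}{64}c_1^3$, hence $A=\frac{7x^3}{24(1-x^2)}$, $B=-\tfrac32x$, $C=-x$, exactly as in the paper.

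\textbf{The $\sigma_4$ estimate is not done, and your fallback fails.} This estimate is the real content of the theorem. Here $AC<0$. For $x\ge 4/7$ one has $|B|\ge 2(1-|C|)$ and $B^2=\tfrac94x^2>\tfrac76x^2=-4AC(C^{-2}-1)$, so you are in the $R(A,B,C)$ branch. There the crude bound gives $3(1-x^2)(|A|+|B|+|C|)=\tfrac{15}{2}x-\tfrac{53}{8}x^3$, which equals $\tfrac{1046}{343}>3$ at $x=4/7$. The sign cancellation between $A$ and $C\tau_2^2$ is therefore essential. What works, as in the paper, is the following.
For $0<x<4/7$, the first branch of Lemma~\ref{L2}(ii) gives $3(1-x^2)Y=3-\tfrac{21}{16}x^2+\tfrac{13}{16}x^3$, which is strictly decreasing from $3$.
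For $4/7\le x<1$, use $R\le(|A|+|C|)\sqrt{1-B^2/(4AC)}$. This holds whenever $AC<0$: on $|z|=1$, $|A+Bz+Cz^2|^2$ is a concave quadratic in $\cos\theta$ whose unconstrained maximum is $(|A|+|C|)^2\bigl(1-B^2/(4AC)\bigr)$. It yields $\Theta(x)=\frac{(24-17x^2)\sqrt{27-13x^2}}{8\sqrt{14}}$, which is decreasing with $\Theta(4/7)\approx 2.94$.
If you compute $R$ exactly, note that for $4/7\le x\le\sqrt{72/149}$ the condition $|AB|\le|C|(|B|-4|A|)$ does hold. There $3(1-x^2)R=\tfrac{15}{2}x-\tfrac{67}{8}x^3\le\tfrac{934}{343}<3$. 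The paper wrongly asserts that neither boundary condition holds, but its bound survives by the concavity remark.

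\textbf{Your sharpness test for $\sigma_3$ checks the wrong thing.} Since $\frac{\zeta^2}{1+\zeta^2}=\zeta^2-\zeta^4+\cdots$, the stated $f_5$ does have $a_2=0$ and $a_3=\tfrac16$, so your check would lead you to accept it. But $f_5\notin\mathcal{BT}_{\mathfrak B}$. The map $z\mapsto z^2/(1+z^2)$ sends $\mathbb D$ onto the half-plane $\operatorname{Re}w<\tfrac12$, so it is not a Schwarz function. It even takes the value $i\pi/2$, a pole of $\tanh$, at a point with $|z|^2=\frac{\pi/2}{\sqrt{1+\pi^2/4}}<1$, so $f_5$ is not even analytic in $\mathbb D$. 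Membership, not the initial coefficients, is what must be verified. Use $\int_0^z\mathfrak B(\zeta^2)\,d\zeta$ unconditionally; this is also the function the paper's proof actually uses, contrary to the statement. Your treatment of $f_6$ is fine.
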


\begin{proof}
Let $f \in \mathcal{BT}_{\mathfrak{B}}$ be given by \eqref{eq1}. By the definition of the class,
there exists a Schwarz function $\omega \in \Omega$ such that $f'(z) = \mathfrak{B}(\omega(z))$.
Let $\omega (z)=\sum_{n=1}^{\infty} \omega_n z^n\quad z\in \mathbb{D}$. Since $\omega(z)$ is a Schwarz function, there exists a function $p \in \mathcal{P}$, given by \eqref{p1} such that $\omega(z)=(p(z)-1)/(p(z)+1)$. By equating the coefficients of like powers of $z$, we obtain the following relations between the coefficients of $p$ and $\omega$.
\[
\begin{aligned}
\omega_1 &= \frac{1}{2} c_1, \quad \omega_2 = \frac{1}{2}\left( c_2 - \frac{c_1^2}{2} \right), \\[4pt]
\omega_3 &= \frac{1}{2}\left( c_3 - c_1 c_2 + \frac{c_1^3}{4} \right), \\[4pt]
\omega_4 &= \frac{1}{2}\left(
c_4 - c_1 c_3
+ \frac{3}{4} c_1^2 c_2
- \frac{1}{2} c_2^2
- \frac{1}{8} c_1^4
\right).
\end{aligned}
\]
Next, by substituting the Taylor coefficients of $f(z)$ and $\omega(z)$ into the equation $f'(z) = \mathfrak{B}(\omega(z))$, we obtain the following relations between the coefficients of $f$ and $p$:
\begin{align}
a_2 &= \frac{1}{8} c_1, \label{eq:a2_final} \\
a_3 &= \frac{1}{12} c_2 - \frac{5}{96} c_1^2, \label{eq:a3_final} \\
a_4 &= \frac{1}{16} c_3
- \frac{5}{64} c_1 c_2
+ \frac{31}{1536} c_1^3, \label{eq:a4_final}
\end{align}

\medskip
\noindent
{\bf (i) Sharp bound of $|\sigma_3(f)(0)|$:}
The above equations yield,
\[
\sigma_3(f)(0)= 6(a_3-a_2^2)=\frac{c_2}{2}- \frac{13 c_1^2}{32}.
\]
Since $v=\frac{13}{16}\in(0,1)$, Lemma~\ref{L3} gives
\[
|\sigma_3(f)(0)| = \left| \frac{1}{2} \left( c_2 - \frac{13}{16} c_1^2 \right) \right| \le \frac{1}{2}(2) = 1.
\]

\medskip
\noindent{\bf Sharpness.} Let
\[
f_5'(z)=\mathfrak{B}(\omega(z))=\sqrt{1+\tanh(z^2)},
\]
where $\omega(z)=z^2$. Since $\omega \in \Omega$, the associated function $f_5 \in \mathcal{BT}_{\mathfrak{B}}$. Expanding $f_5'(z)$ near the origin and integrating, we obtain
\[
f_5(z)=z+\frac{1}{6}z^3+O(z^5).
\]
Thus $a_2=0$ and $a_3=\frac{1}{6}$. Consequently, $|\sigma_3(f_5)(0)|=1$,
which shows that the bound is sharp.

\begin{figure}[H]
\centering
\begin{tikzpicture}[scale=1.2, >=stealth]
\begin{scope}
    \shade[inner color=blue!5, outer color=blue!20] (0,0) circle (2);
    \draw[blue!30, thin, step=0.5] (-2,-2) grid (2,2);
    \draw[thick] (0,0) circle (2);
    \draw[->] (-2.4,0) -- (2.4,0) node[right] {\small $\Re z$};
    \draw[->] (0,-2.4) -- (0,2.4) node[above] {\small $\Im z$};
    \fill (0,0) circle (2pt) node[below left] {$0$};
    \draw[->, red, ultra thick] (0,0) -- (1.4,1.4) node[above right, black] {$z$};
    \node at (0,-3) {$\mathbb{D}$};
\end{scope}
\draw[->, ultra thick, gray!50] (2.8, 0.5) to[out=30, in=150] node[above, black] {$f_5(z)$} (4.2, 0.5);
\begin{scope}[xshift=7cm]
    \shade[inner color=magenta!5, outer color=magenta!20]
        plot[domain=0:360, samples=200, smooth]
        ({(1.4 + 0.5*cos(2*\x))*cos(\x)}, {(0.9 + 0.3*cos(2*\x))*sin(\x)});
    \draw[thick, magenta!80!black, smooth, variable=\t, domain=0:360, samples=200]
        plot ({(1.4 + 0.5*cos(2*\t))*cos(\t)}, {(0.9 + 0.3*cos(2*\t))*sin(\t)});
    \draw[->] (-2.2,0) -- (2.2,0) node[right] {\small $\Re w$};
    \draw[->] (0,-2.2) -- (0,2.2) node[above] {\small $\Im w$};
    \fill (0,0) circle (2pt) node[below left] {$0$};
    \node at (0,-3) { $f_5(\mathbb{D})$};
\end{scope}
\end{tikzpicture}
\caption{The conformal mapping $f_5$ from the unit disk $\mathbb{D}$ onto the bean-shaped domain $\Omega_{\mathfrak{B}}$, illustrating the sharp case for the third-order Schwarzian derivative.}
\end{figure}

\medskip
\noindent
{\bf (ii) Sharp bound of $\sigma_4(f)(0)$.} From \eqref{sg4} together with \eqref{eq:a2_final}--\eqref{eq:a4_final}, we compute
\[
\sigma_4(f)(0)=\frac{3}{2}c_3 - \frac{21}{8}c_1c_2 + \frac{67}{64}c_1^3.
\]
Next, applying the parametrization from Lemma~\ref{L1}, we obtain
\begin{equation}
\sigma_4(f)(0)
= \frac{7}{8}\tau_1^3
- \frac{9}{2}(1-|\tau_1|^2)\tau_1\tau_2
- 3(1-|\tau_1|^2)\overline{\tau_1}\tau_2^2
+ 3(1-|\tau_1|^2)(1-|\tau_2|^2)\tau_3.
\end{equation}
By rotational invariance, we may assume $\tau_1 = x \in [0,1]$. Then
\begin{equation}\label{G11_new}
\sigma_4(f)(0)
= \frac{7}{8}x^3
+ 3(1-x^2)\Big[(1-|\tau_2|^2)\tau_3
-\frac{3}{2}x\tau_2
- x\tau_2^2\Big].
\end{equation}
If $x=1$, then $|\sigma_4(f)(0)| = \frac{7}{8}$.
If $x=0$, then $|\sigma_4(f)(0)| \le 3$.
Hence, we restrict to $0 < x < 1$. Applying the triangle inequality to \eqref{G11_new}, together with $|\tau_3|\le1$, we obtain
\begin{equation}\label{G12_new}
|\sigma_4(f)(0)|
\le 3(1-x^2)\left(
|A + B\tau_2 + C\tau_2^2| + 1 - |\tau_2|^2
\right),
\end{equation}
where
\[
A = \frac{7x^3}{24(1-x^2)}, \qquad
B = -\frac{3}{2}x, \qquad
C = -x.
\]
Since $AC < 0$ for $x \in (0,1)$, Lemma~\ref{L2}(ii) is applicable.

\medskip
\noindent
(\textbf{Case a}): Let $x\in(0,\tfrac{4}{7})$. We examine the functional $Y(A,B,C)$ under the first case of Lemma~\ref{L2}(ii).
Using the parametrization of the Carath\'eodory coefficients, we verify the admissibility conditions for
\[
Y(A,B,C)=1-|A|+\frac{B^2}{4(1-|C|)}.
\]
The inequality $|B|<2(1-|C|)$ gives $\frac{3}{2}x<2(1-x)$, hence $x<\frac{4}{7}$.
Further,
\[
-4AC(C^{-2}-1)
=-4\left(\frac{7x^3}{24(1-x^2)}\right)(-x)
\left(\frac{1}{x^2}-1\right)
=\frac{7}{6}x^2
\le \frac{9}{4}x^2=B^2,
\]
which holds for all $x\ge0$.
Thus the case is valid for $x\in(0,\frac{4}{7})$. Consequently,
\[
\Psi(x)=3(1-x^2)Y(A,B,C)=3-\frac{21}{16}x^2+\frac{13}{16}x^3.
\]
Differentiating, $\Psi'(x)
=-\frac{21}{8}x+\frac{39}{16}x^2
=\frac{3x}{16}(13x-14).$
Since $13x-14<0$ for $x\in(0,\frac{4}{7})$, it follows that $\Psi'(x)<0$ on this interval.
Hence $\Psi$ is strictly decreasing and
\[
\max_{x\in[0,\frac{4}{7}]}\Psi(x)=\Psi(0)=3.
\]

\medskip
\noindent
(\textbf{Case b}): Let $x\in[\tfrac{4}{7},1)$. Here we have $|B|=\frac{3}{2}x$ and $2(1-|C|)=2(1-x)$.
Since $\frac{3}{2}x\ge 2(1-x)$ on this interval, the maximum of
$Y(A,B,C)$ must be obtained from $R(A,B,C)$.

From the previous analysis, the parameters of the class
$\mathcal{BT}_{\mathfrak{B}}$ do not satisfy the boundary
conditions
\[
|C|(|B|+4|A|)\le |AB|
\quad \text{or} \quad
|AB|\le |C|(|B|-4|A|).
\]
Hence we consider the remaining case
\[
R(A,B,C)
=(|C|+|A|)\sqrt{1-\frac{B^2}{4AC}}.
\]
We compute
\[
1-\frac{B^2}{4AC}
=1-\frac{(9/4)x^2}
{-\frac{7}{6}\frac{x^4}{1-x^2}}
=1+\frac{27(1-x^2)}{14x^2}
=\frac{27-13x^2}{14x^2}.
\]
Simplifying we obtain
\[
R(x)
=\frac{(24-17x^2)\sqrt{27-13x^2}}
{24\sqrt{14}(1-x^2)}.
\]
Hence the objective function
\[
\Theta(x)=3(1-x^2)R(x)
=\frac{(24-17x^2)\sqrt{27-13x^2}}
{8\sqrt{14}}.
\]
To maximize $\Theta$ on $[4/7,1)$, set $u=x^2\in[16/49,1)$ and define
\[
h(u)=(24-17u)\sqrt{27-13u}.
\]
Differentiating,
\[
h'(u)
=\frac{-34(27-13u)-13(24-17u)}
{2\sqrt{27-13u}}
=\frac{663u-1230}
{2\sqrt{27-13u}}.
\]
The critical point satisfies $663u-1230=0$, giving
$u=\frac{1230}{663}\approx1.855,$
which lies outside $[16/49,1)$. Since $h'(u)<0$ on the admissible
interval, $\Theta(x)$ is strictly decreasing on $[4/7,1)$. Evaluating at the endpoints,
\[
\Theta\!\left(\frac{4}{7}\right)\approx 2.9401,
\qquad
\lim_{x\to1}\Theta(x)
=\frac{(24-17)\sqrt{27-13}}
{8\sqrt{14}}
=\frac{7}{8}.
\]
Comparing with the previous case where $\max_{x\in[0,4/7)}\Psi(x)=\Psi(0)=3$, we conclude
\[
\max_{x\in[0,1]}|\sigma_4(f)(0)|
=\max\{3,\Theta(4/7)\}
=3.
\]
Thus the sharp bound is attained uniquely in the threefold symmetric case $c_1=c_2=0$, giving $|\sigma_4(f)(0)|=3$.
This completes the proof.

\medskip
{\bf Sharpness.}\quad Define $f_6 \in \mathcal{BT}_{\mathfrak{B}}$ by $f_6'(z)=\mathfrak{B}(z^3)$.
Then, using the Taylor expansion of $\sqrt{1+\tanh u}$ and integrating, we obtain
\[
f_6(z)=\int_0^z \left(1+\tfrac{1}{2}\zeta^3-\tfrac{1}{8}\zeta^6+O(\zeta^9)\right)d\zeta
= z+\tfrac{1}{8}z^4-\tfrac{1}{56}z^7+\cdots.
\]
This implies that $a_2=a_3=0$, and $a_4=\tfrac{1}{8}$. Hence $|\sigma_4(f_6)(0)|=3$, showing that the bound is sharp.

\begin{figure}[H]
\centering
\begin{tikzpicture}[scale=1.2, >=stealth]
\begin{scope}
    \shade[inner color=blue!5, outer color=blue!20] (0,0) circle (2);
    \draw[blue!30, thin, step=0.5] (-2,-2) grid (2,2);
    \draw[thick] (0,0) circle (2);
    \draw[->] (-2.4,0) -- (2.4,0) node[right] {\small $\Re z$};
    \draw[->] (0,-2.4) -- (0,2.4) node[above] {\small $\Im z$};
    \fill (0,0) circle (2pt) node[below left] {$0$};
    \draw[->, red, ultra thick] (0,0) -- (1.0,1.3) node[above right, black] {$z$};
    \node at (0,-3) {$\mathbb{D}$};
\end{scope}
\draw[->, ultra thick, gray!60] (2.8, 0.5) to[out=30, in=150] node[above, black] {$f_6(z)$} (4.2, 0.5);
\begin{scope}[xshift=7cm]
    \shade[inner color=magenta!5, outer color=magenta!20]
        plot[domain=0:360, samples=300, smooth]
        ({(1.2 + 0.4*cos(3*\x))*cos(\x)}, {(1.1 + 0.3*cos(3*\x))*sin(\x)});
    \draw[thick, magenta!80!black, smooth, variable=\t, domain=0:360, samples=300]
        plot ({(1.2 + 0.4*cos(3*\t))*cos(\t)}, {(1.1 + 0.3*cos(3*\t))*sin(\t)});
    \draw[->] (-2.2,0) -- (2.2,0) node[right] {\small $\Re w$};
    \draw[->] (0,-2.2) -- (0,2.2) node[above] {\small $\Im w$};
    \fill (0,0) circle (2pt);
    \node at (0,-3) {$f_6(\mathbb{D})$};
\end{scope}
\end{tikzpicture}
\caption{Visual representation of the threefold symmetric mapping $f_6$ establishing the sharp bound $|\sigma_4(f)(0)|=3$.}
\label{fig:threefold_mapping_f6}
\end{figure}
\end{proof}

\section{Applications to Initial Grunsky Coefficients}

The Grunsky coefficients $g_{n,m}$ for a function $f \in \mathcal{S}$ are generated by the logarithmic expansion
\begin{equation}
\log \left( \frac{f(z) - f(\zeta)}{z - \zeta} \right)
= \sum_{p,q=1}^{\infty} g_{p,q} z^p \zeta^q,
\quad (z,\zeta)\in\mathbb{D}\times\mathbb{D}.
\end{equation}
These coefficients play a fundamental role in univalence theory and are closely connected with higher-order Schwarzian derivatives.

For a function $f$ normalized as in \eqref{eq1}, the initial Grunsky coefficients are given by
\begin{align}
g_{1,1} &= a_3 - a_2^2= \frac{1}{6}\,\sigma_3(f)(0), \label{g11} \\
g_{1,2} &= a_4 -2 a_2 a_3 +a_2^3=\frac{1}{24}\,\sigma_4(f)(0) + a_2 g_{1,1}. \label{g12}
\end{align}
Using the sharp bounds obtained in Section~3, we derive the following estimates.

\subsection{Estimates of initial Grunsky coefficients for functions in $\mathcal{S}_{\leftmoon}^*$ and $\mathcal{C}_{\leftmoon}$}

Since $|a_2|\leq 1$ for functions in $\mathcal{S}_{\leftmoon}^*$ and $|a_2|\leq 1/2$ for functions in $\mathcal{C}_{\leftmoon}$ (see \cite{RiazRaza2023}), Theorem~\ref{thm1} and Theorem~\ref{thm2} immediately yield the following estimates for the first two Grunsky coefficients. In particular, for the lune-starlike class $\mathcal{S}_{\leftmoon}^*$, we obtain
\[
|g_{1,1}| \le \frac{1}{2},
\qquad
|g_{1,2}| \le \frac{2}{3}\sqrt{\frac{2}{7}}+\frac{1}{2}.
\]
In contrast, the additional geometric rigidity of the lune-convex class $\mathcal{C}_{\leftmoon}$ yields sharper bounds:
\[
|g_{1,1}| \le \frac{1}{6},
\qquad
|g_{1,2}| \le \frac{1}{6}.
\]
All the above mentioned inequalities are sharp.

\subsection{Estimates of initial Grunsky coefficients for functions in $\mathcal{BT}_{\mathfrak{B}}$}

Let $f\in \mathcal{BT}_{\mathfrak{B}}$. It is known that $|a_2|\leq 1/4$ (see \cite{Nandhini}). Combining this estimate with Theorem~\ref{thm3}, we obtain the following sharp bounds for the first two Grunsky coefficients of functions in the bean-shaped class $\mathcal{BT}_{\mathfrak{B}}$:
\[
|g_{1,1}| \le \frac{1}{6},
\qquad
|g_{1,2}| \le \frac{1}{6}.
\]

\begin{table}[ht]
\centering
\small
\begin{tabular}{lccc}
\toprule
Class & $|g_{1,1}|$ & $|g_{1,2}|$ & Geometry \\
\midrule
$\mathcal{S}_{\leftmoon}^*$ & $ \frac{1}{2}$ & $\frac{2}{3}\sqrt{\frac{2}{7}}+\frac{1}{2}$ & Starlike (lune) \\
$\mathcal{C}_{\leftmoon}$ & $\frac{1}{6}$ & $\frac{1}{6}$ & Convex (lune) \\
$\mathcal{BT}_{\mathfrak{B}}$ & $\frac{1}{6}$ & $\frac{1}{6}$ & Bean-shaped \\
\bottomrule
\end{tabular}
\caption{Sharp upper bounds for the initial Grunsky coefficients across different geometric classes.}
\end{table}

\section{Conclusion}

In this paper, we obtained sharp upper bounds for the third- and fourth-order Schippers functionals for the classes $\mathcal{S}_{\leftmoon}^*$, $\mathcal{C}_{\leftmoon}$, and $\mathcal{BT}_{\mathfrak{B}}$. As an application of these results, we also derived estimates for some initial Grunsky coefficients associated with functions belonging to these classes. The study of higher-order Schippers functionals remains an interesting open problem. Although similar methods may be applicable for obtaining sharp bounds for $|\sigma_n(f)(0)|$, $n \ge 5$, the transcendental nature of the defining functions leads to significantly greater computational and analytical difficulties. It would also be interesting to investigate whether the extremal functions continue to exhibit rotational symmetry for higher orders or whether symmetry breaking occurs beyond a certain stage. We hope that the present work will motivate further research in these directions.

\section*{Declarations}


\subsection*{Data Availability}
All figures were generated numerically using parametrizations of the unit circle together with adaptive quadrature techniques. No datasets were generated or analyzed during the current study; hence, data sharing is not applicable.

\subsection*{Conflict of Interest}
The authors declare that they have no conflict of interest.

\subsection*{Author Contributions}
Firdoshi Parveen and Pradip Das contributed equally to all aspects of this work, including conception, analysis, writing, and approval of the final manuscript.

\end{document}